\documentclass[10pt]{amsart}

\usepackage{graphicx,comment} 

\usepackage{amsthm,amsmath,amsfonts,amssymb}
\usepackage{enumitem,todonotes,url}
\usepackage{tikz,tikz-cd}
\usepackage[all]{xy}
\usepackage{xcolor}
\usepackage[normalem]{ulem}

\usepackage{dsfont}
\usetikzlibrary{matrix}
\usepackage{float}

\numberwithin{equation}{section}

\theoremstyle{plain}
\newtheorem{theorem}[equation]{Theorem}
\newtheorem{corollary}[equation]{Corollary}
\newtheorem{prop}[equation]{Proposition}
\newtheorem{lemma}[equation]{Lemma}

\newtheorem{eg}[equation]{Example}
\newtheorem{defn}[equation]{Definition}
\newtheorem{Quest}[equation]{Question}

\newcommand{\Img}{\operatorname{Im}}

\newcommand{\mc}[1]{\mathcal{#1}}

\newcommand{\mr}[1]{\mathrm{#1}}

\newcommand{\mit}[1]{\mathit{#1}}

\newcommand{\abs}[1]{\lvert #1 \rvert}

\newcommand{\bra}[1]{\langle #1 \rangle}

\newcommand{\td}[1]{\widetilde{#1}}

\newcommand{\ZZ}{\mathbb{Z}}
\newcommand{\RR}{\mathbb{R}}

\DeclareMathOperator{\Map}{Map}

\title[Quasimonophobic graphs and degree spectral sequences]
{Quasimonophobic graphs and degree spectral sequences in discrete cubical homology}
\author{Samira Sahar Jamil}
\address{Department of Mathematics, University of Notre Dame, Notre Dame, Indiana}

\author{Mark Behrens}
\address{Department of Mathematics, University of Notre Dame, Notre Dame, Indiana}

\date{\today}

\begin{document}
\begin{abstract}
We introduce the degree filtration on the discrete cubical chain complex of a graph, defined in terms of the maximal injective dimension of the facets of singular $n$-cubes, and study the degree spectral sequence which arises from this filtration.  This spectral sequence interpolates between the discrete cubical homology of a graph $H_n(G)$ and the injective homology $H_n^{inj}(G)$, a variant of the discrete cubical homology based on injective singular cubes. Building on the work of Greene and the first author, we introduce the combinatorial condition of quasimonophobicity on graphs, and show quasimonophobicity implies both the vanishing of the degree spectral sequence in certain bidegrees, and implies $H_n^{inj}(G)$ is isomorphic to the homology of the CW complex obtained by ``filling in'' subcubes of the graph.  These results are applied to compute $H_2(G_n^{sph})$ for the Greene sphere graphs $G^{sph}_n$.
\end{abstract}

\maketitle

\section{Introduction}
In \cite{barcelo2014_discrete_metric}, Barcelo, Capraro, and White introduced the discrete cubical homology groups\footnote{These are sometimes denoted $\mc{H}_n^{\mr{Cube}}(G)$ --- see  \cite{barcelo2019discrete}.}
$$ H_n(G) $$ 
of a graph $G$.  These homology groups capture the higher-dimensional structure beyond that seen in classical homologies (where one considers the homology of the $1$-dimensional CW complex associated to a graph, or the homology of a simplicial complex, such as the clique complex, associated to a graph). In contrast to these classical homologies, discrete cubical homology probes a graph using discrete cubes of all dimensions, yielding an interesting theory both from a theoretical perspective and for its applications.

Discrete cubical homology, together with its associated homotopy theory \cite{barcelo2001foundations}, raises interesting questions in higher category theory. For instance, computational considerations of discrete cubical homology led to the result that the homology of a cubical set is unchanged when a subcomplex consisting of connection cubes is quotiented out from the chain complex \cite{barcelo2021connections}. Similar studies motivated the comparison between homotopy $n$-types of graphs and those of cubical and simplicial sets \cite{carranzakapulkincubical}, \cite{kapulkin2024homotopy_n_types}, \cite{carranzakapulkinhomotopyhyp}. On the other hand, from an applied viewpoint, discrete cubical homology has also been shown to be more robust to noise than the homology of the flag complex of a graph, making it useful for persistent homology and topological data analysis \cite{kapulkin2025data_analysis_discrete_homology}.

Despite its conceptual appeal and broad applications, 
discrete cubical homology is shockingly difficult to compute in degrees greater than $1$, even for some very simple graphs.  The primary reason for this is that the hypotheses for excision in discrete cubical homology are much more constrained than in the topological or simplicial contexts \cite[Thm.~3.4]{barcelo2014_discrete_metric}.  While discrete cubical homology of a finite graph is in principle finitely computable, in practice, even with the implementation of efficient algorithms, actual machine computation of many simple graphs is limited to degrees less than or equal to 3 \cite{kapulkin2024efficient}.
This difficulty highlights the limited understanding of the nature of the higher-dimensional graph cells contributing to nontrivial homology classes.

At present, it is only known that higher-dimensional cells cannot arise in the absence of triangles and squares in the graph. Indeed, a key result of \cite{barcelo2021SIAM} shows that graphs containing no triangles or squares have trivial discrete cubical homology in dimensions $2$ and higher. This result is consistent with earlier work in discrete homotopy theory, where triangles and squares play the role of $2$-cells: the discrete fundamental group agrees with the fundamental group of the CW complex obtained by attaching $2$-cells along all triangles and squares 
\cite{barcelo2001foundations}. 
It follows that graphs can be used to represent homotopy $1$-types.  Homotopy $n$-types for $n\geq 2$ continue to be rather mysterious objects of study \cite{kapulkin2024homotopy_n_types}, though Carranza and Kapulkin have recently shown the striking result that every homotopy $n$-type can be abstractly represented by a graph \cite{carranzakapulkinhomotopyhyp}.

These results naturally lead to the question of whether there exists an intrinsic notion of dimension for graphs that governs the vanishing of discrete cubical homology in higher degrees. A natural candidate is the \emph{cubical dimension} of a graph, defined as the maximal dimension of an injective singular cube it contains. It is unclear whether this notion alone is sufficient: in principle, non-cubical subgraphs could contribute to homology in degrees exceeding the cubical dimension, though we know of no actual examples of this phenomenon occurring.

In this paper, we will define a \emph{degree filtration} on the discrete cubical singular chains of a graph which arises from consideration of the maximal injective dimension of the facets of a singular cube. Associated to this filtration is a \emph{degree spectral sequence}
$$ E^1_{p,q} (G) =  H^{[p]}_{p+q}(G) \Rightarrow H_{p+q}(G) $$
where the homology groups $H^{[p]}_*(G)$ are the homology of the $p$th associated graded of our filtration and satisfy
$$ H^{[>n]}_n(G) = 0.$$
If $G$ has cubical dimension $d$, then
$$ H^{[>d]}_n(G) = 0. $$

We will define (Definition~\ref{defn:Hinj}) the \emph{injective homology} $H^{inj}_*(G)$ to be the edge of the $E_2$-page of this spectral sequence given by the homology of the chain complex
$$ H^{[0]}_0(G) \xleftarrow{d_1} H^{[1]}_1(G) \xleftarrow{d_1} \cdots. $$
There are maps
\begin{equation}\label{eq:edge}
 H_n(G) \twoheadrightarrow E^{\infty}_{n,0}(G) \hookrightarrow H^{inj}_n(G).
 \end{equation}

The injective homology promises to be much more computable than the discrete cubical homology, because it only involves injective singular cubes.  Indeed, we will show  (Theorem~\ref{thm:cell}) that if $CW^\Box(G)$ is the CW complex whose $n$-cells correspond to the subgraphs of $G$ isomorphic to $n$-cubes, then there is a surjection of chain complexes
\begin{equation}\label{eq:cell}
 C^{\emph{cell}}_n(CW^\Box(G)) \twoheadrightarrow H^{[n]}_n(G)
 \end{equation}
resulting in a map
\begin{equation}\label{eq:comparison}
 H_n(CW^\Box(G)) \to H^{inj}_n(G).
 \end{equation}
We then are motivated to ask:
\begin{Quest}\label{quest:comparison}
When are the map (\ref{eq:comparison}) and the maps (\ref{eq:edge}) isomorphisms, resulting in an isomorphism
$$ H_n(G) \cong H_n(CW^\Box(G))? $$
\end{Quest}

We will see that sometimes affirmative answers to Question~\ref{quest:comparison} derive from combinatorial conditions on the graph $G$.  For example, the results of \cite{barcelo2021SIAM} imply that 
\begin{equation*}
H_n(G) \cong H_n(CW^\Box(G))
\end{equation*}
for triangle-free square-free graphs.\footnote{For such graphs, $CW^\Box(G)$ is nothing other than the $1$-dimensional CW complex associated to the graph $G$.} Moreover, it follows from \cite[Thm.~3.4]{EnderKapulkin} (see also \cite[Prop.~5.12]{barcelo2001foundations}) that for triangle-free graphs $G$ we have\footnote{Ender and Kapulkin instead consider the $2$-dimensional CW complex $X_G$ obtained by filling in triangles and squares with $2$-cells, but for $G$ triangle-free, $X_G$ is just the $2$-skeleton of $CW^\Box(G)$.}
$$ H_1(G) \cong H_1(CW^\Box(G)). $$

The groups $H^{[0]}_{\ge 1}(G)$ are all zero for trivial reasons.
We will adapt the ``covering space'' technique used in \cite{barcelo2021SIAM} to prove that for $G$ triangle-free we
have (Theorem~\ref{thm_H_2_1=0})
$$ H^{[1]}_{\ge 2}(G) = 0. $$
It will follow from the degree spectral sequence (Corollary~\ref{cor_2_homology_no_3_cycles}) 
that if $G$ is a triangle-free graph of cubical dimension $2$, there are isomorphisms
$$
H_n(G) \cong 
\begin{cases}
H_n^{inj}(G), & n \le 2, \\
H_n^{[2]}, & n > 2.
\end{cases}
$$

The notion of monophobic cube neighborhoods, introduced in \cite{monophobic}, provides a different generalization of the triangle-free square-free condition considered by \cite{barcelo2021SIAM}, where one considers graphs in
which every $n$-cube has a monophobic neighborhood.

Indeed, a triangle-free and
square-free graph is precisely one in which every edge (viewed as a
$1$-cube) has a monophobic neighborhood (such graphs are referred to as $1$-monophobic, see Definition \ref{defn_monophobic}). In this paper, we introduce the class
of \emph{$n$-quasimonophobic} graphs (see Definition~\ref{defn_quasimonophobic}).  It will turn out that a graph is $1$-quasimonophobic if and only if it is triangle-free.

We will show (Theorem~\ref{thm:cell}) that if $G$ is $n$-quasimonophobic, then the map (\ref{eq:cell}) gives an isomorphism
\begin{equation}\label{eq:celliso}
 H^{[n]}_n(G) \cong C^{\emph{cell}}_n(CW^\Box(G)). \end{equation}
 We will then deduce (Theorem~\ref{thm:iso}) that for $G$ $n$-quasimonophobic, the map (\ref{eq:cell}) induces an injection
 $$ H_n(CW^\Box(G)) \hookrightarrow H^\mathit{inj}_n(G) $$
 and this map is an isomorphism if $G$ is $(n-1)$-quasimonophobic.

We then deduce our main result (Theorem~\ref{thm:main}) from the degree spectral sequence.  Suppose that $G$ is $(\le 2)$-quasimonophobic.  Then the map (\ref{eq:edge}) and map (\ref{eq:cell}) induce an isomorphism
$$ H_2(G) \cong H_2(CW^\Box(G)). $$
This appears to be the first systematic result computing discrete cubical $H_2$ with the exception of that of \cite{barcelo2021SIAM}, which establishes that $H_2(G) = 0$ for $G$ triangle-free and square-free. 

As an application, we compute the second discrete cubical homology of the Greene spheres $G^{sph}_n$. The graphs $G^{sph}_n$ ($n \ge 4$) are a family of graphs of cubical dimension $2$ which were constructed by Greene to model the $2$-sphere in the setting of discrete cubical homology.  Specifically, the graph $G^{sph}_4$ was introduced in \cite{barcelo2019discrete}, where the authors prove that $H_2(G^{sph}_4) \ne 0$, and machine computation is used in \cite{kapulkin2024efficient} to show that\footnote{In fact, \cite{kapulkin2024efficient} also compute $H_3(G^{sph}_4) = 0$.}
$$ H_2(G^{sph}_4) \cong \ZZ. $$
The Greene spheres turn out to be $n$-quasimonophobic for every $n$, and hence are amenable to Theorem~\ref{thm:main}.  We will deduce
(Theorem~\ref{thm_Greenes_sphere})
\begin{equation}\label{eq:Greene}
 H_2(G^{sph}_n) = \ZZ \qquad \text{for $n \ge 4$}.
 \end{equation}
Presumably, machine computations could be used to reproduce (\ref{eq:Greene}) for small $n$.

We also explain more generally how to  ``geometrically'' produce nontrivial classes 
in $H_n(G)$ when $G$ is $n$-quasimonophobic (Theorem~\ref{thm:geom}).

\textbf{Organization of the paper}  

In Section~\ref{sec_[prelim]} we set our basic definitions, and recall the definition of discrete cubical homology.  In Section~\ref{sec_[dss]} we define the degree filtration and the degree spectral sequence.  We show in Section~\ref{sec_H_n_1_equals_0} that triangle-free graphs $G$ have $H^{[1]}_{\ge 2}(G) = 0$.  In Section~\ref{sec_injective_cycles} we introduce the notion of $n$-quasimonophobicity, and show that for such graphs $G$ we have $H^{[n]}_n(G) \cong C^{\emph{cell}}_n(CW^\Box(G))$, and deduce our main results.  In Section~\ref{sec_[examples]} we apply our results to compute $H_2(G^{sph}_n)$.  We also give an example to show the necessity of quasimonophobicity for Theorem~\ref{thm:main} and contemplate possible generalization of Theorem~\ref{thm:main} for quasimonophobic graphs.  We end this section with a geometric construction of classes of $H_n(G)$, which we prove are non-trivial when $G$ is $n$-quasimonophobic.
\newpage
\textbf{Acknowledgments}

The first author thanks Curtis Greene and Georg Wille for helpful discussions; in particular, Wille brought Example~\ref{eg_K23} to her attention. The authors were also grateful for the comments Chris Kapulkin provided on an earlier draft.  This material is based in part upon work supported by the National Science Foundation under Grant No. DMS-1928930 and by the Alfred P. Sloan Foundation under grant G-2021-16778, while the first author was in residence at the Simons Laufer Mathematical Sciences Institute (formerly MSRI) in Berkeley, California, during the year 2024.  
This material is also based upon work supported by the National Science Foundation under Award No. DMS-2506564.  

\section{Preliminaries}\label{sec_[prelim]}

Throughout this document, the term “graph” denotes a simple undirected graph -- one without loops or multiple edges. We denote a graph as a pair $(V,E)$ of vertex set $V$ and edge set $E$, where each edge is a two-point subset of $V$. We sometimes denote the vertex and edge sets as $V(G)$ and $E(G)$, respectively. 

A \emph{graph map}
$$ f\colon G\to G'$$ 
from a graph $G=(V,E)$ to another graph $G'=(V',E')$ is a map $f\colon V\to V'$ between their vertex sets that maps every edge to an edge or a vertex. 
We shall say $f$ is an isomorphism if it has an inverse.  We will let 
$$ \Map(G,G') $$
denote the set of all graph maps from $G$ to $G'$.

The \emph{image} $f(G)$ or $\Img(f)$ of a graph map $f\colon  G\to G'$ is the subgraph of $G'$ with vertex set $f(V)$ and only those edges of $G'$ that are images of the edges in $G$. 

We shall write
$$ G \subseteq G' $$
to indicate that $G$ is a subgraph of $G'$, in the sense that $V(G) \subseteq V(G')$ and $E(G) \subseteq E(G')$.  A
subgraph $G \subseteq G'$ is \emph{induced} if for each pair of vertices $v,w \in V(G)$, an edge connects them in $G$ if and only if an edge connects them in $G'$.

The \emph{neighborhood graph} $N(v)$ of a vertex $v$ in a graph $G$ is the induced subgraph of $G$ containing $v$ and the vertices adjacent to $v$.
We say that a graph map $f: G \to G'$ is a local isomorphism if for all vertices $v\in V(G)$, $f$ restricts to an isomorphism 
$$ f\big\vert_{N(v)}: N(v) \to N(f(v)).  $$ 

For $n\geq 0$, we will let $I^n$ denote the \emph{discrete $n$-cube graph}, with the vertex set $\{0,1\}^n$, where two vertices are adjacent if they differ in exactly one coordinate. In particular, $I^0$ is the graph with a single vertex and no edges. 

For each $n \ge 0$, define the \emph{singular cubical $n$-chains on $G$} to be the free abelian group 
$$ Q_n(G) := \ZZ\Map(I^n,G). $$
generated by the \emph{singular $n$-cubes}
$$ \sigma \colon I^n \to G. $$
For $1 \le i \le n$, the \emph{front} and \emph{back $i$-faces} of such a singular $n$-cube $\sigma$
are denoted as $f_i^-\sigma$ and $f^+_i\sigma$, where $f_i^-\sigma$ is obtained by fixing the $i$-th coordinate equal
to $0$, while the back $i$-face $f^+_i\sigma$ is obtained by fixing the $i$-th coordinate equal
to $1$. These face operators satisfy the face relation: for $\sigma$ a singular $n$-cube and $1 \le i < j \le n$, we have 
\begin{align}\label{eq_face relation}
f_i^af_j^b\sigma=f^b_{j-1}f^a_i\sigma.
\end{align}

The \emph{boundary} operator $\partial_n \colon Q_n(G) \to Q_{n-1}(G)$ is defined on a singular $n$-cube $\sigma$ by
$$
\partial_n \sigma = \sum_{i=1}^n (-1)^i (f_i^- \sigma - f^+_i \sigma).
$$
We have $\partial_n \circ \partial_{n+1} = 0$, so that $(Q_*(G), \partial_*)$ forms a chain complex.  
However, as in the case with singular cubical chains for topological spaces, the homology of this chain complex is \emph{not} the desired discrete cubical homology of $G$ (e.g. if $G$ has 1 vertex, $H_n(Q_*(G))$ is nonzero in every degree).  We instead are required to normalize by modding out by degeneracies.

A singular $n$-cube $\sigma$ is said to be \emph{degenerate}, if there exists an index $i$ such that $f_i^- \sigma = f^+_i \sigma$. The boundary of a degenerate cube is a linear combination of degenerate cubes, so the degenerate singular cubes form a subcomplex
$$ D_*(G) \subseteq Q_*(G). $$
We define the \emph{normalized singular cubical chains} to be the quotient complex 
$$ C_*(G) := Q_*(G)/D_*(G). $$
The \emph{discrete cubical homology} is defined to be the homology of this quotient complex:
$$ H_n(G) := H_n(C_*(G)). $$

It should be noted that even for graphs with few vertices, the rank of these normalized singular cubical chain groups typically grows super-exponentially with respect to dimension, making the computation of discrete cubical homology highly challenging.

\section{The degree spectral sequence}\label{sec_[dss]}

The notion of degree of a singular cube, introduced in \cite{PhD_thesis_Samira} as the \emph{degree of injectivity}, quantifies the extent to which a singular cube is injective. 

\begin{defn}
The \emph{degree} of a singular $n$-cube 
$ \sigma\colon  I^n\to G $ 
is defined to be the dimension of the maximal injective facet of $\sigma$.
\end{defn}

Thus degree-$0$ singular cubes are constant, degree-$n$ singular $n$-cubes are injective, and degree-$(n-1)$ singular $n$-cubes are precisely the noninjective singular $n$-cubes with an injective face.

If $\sigma$ is a singular cube of degree $k$, then every face of $\sigma$ has degree at most $k$, \textit{i.e.}, the boundary operator $\partial$ does not, in general, preserve degree. This observation suggests filtering the cubical chain complex according to degree and studying the resulting associated graded complex, in which only the degree-preserving component of the boundary remains.
Let 
$$ C_n^k (G) \le C_n(G) $$ 
denote the subgroup generated by the non-degenerate singular $n$-cubes of degree $k$ or below. This results in a \emph{degree filtration}
$$
C^0_*(G) \le C^1_*(G) \le \cdots 
$$
on the chain complex $C_*(G)$.
We will denote the associated graded of this filtration by
$$ C_n^{[k]}  (G) := C_n^k (G)/C_n^{k-1} (G). $$
Passing to the quotient eliminates every boundary face whose degree is strictly less than $k$. Thus the induced boundary
\[
\partial^{[k]}:C_n^{[k]}(G)\longrightarrow C_{n-1}^{[k]}(G)
\]
is determined entirely by the degree-preserving faces of the cubical boundary. The homology groups $H_*^{[k]}(G)$ therefore isolate the contribution of singular cubes of degree exactly $k$.
We let
$$ H^{[k]}_*(G) := H_*(C^{[k]}_*(G)) $$
denote the homology of this quotient complex.

The cubical dimension of a graph $G$ is the dimension of the maximal
injective singular cube in $G$. If $G$ has cubical dimension $d$, then
\[
C_*^d(G)=C_*(G),
\]
and
\[
C_*^{[k]}(G)=0
\qquad\text{for }k>d.
\]
Hence the degree filtration is finite. 

For any graph $G$, every singular $n$-cube has degree at most $n$, therefore we have
\[
C_n^k(G)=C_n(G), \qquad k\ge n.
\]
Hence the degree filtration is exhaustive and bounded below.  

Recall (see, for example, \cite{weibel1994introduction}) that a filtered chain complex
$$ F_0C_* \le F_1 C_* \le F_2 C_* \le \cdots \le C_* = \bigcup_p F_pC_* $$
has an associated spectral sequence with $E^1$-page
$$ E^1_{p,q} = H_{p+q}\left( \frac{F_p C_*}{F_{p-1}C_*}\right) $$
and $d_1$-differentials
$$ d_1: E^1_{p,q} \to E^1_{p-1,q}. $$
Inductively, the $E^r$-page is a bigraded abelian group $E^r_{p,q}$ with differentials
$$ d_r: E^r_{p,q} \to E^r_{p-r,q+r-1} $$
and 
$$ E^{r+1}_{p,q} := H_{p,q}(E^r_{\bullet, *}, d_r). $$
In the case where the spectral sequence is first quadrant (i.e. $E^r_{p,q} = 0$ if $p$ or $q$ is negative) we have for each fixed $(p,q)$
$$ E^r_{p,q} \cong E^{r+1}_{p,q} $$
for $r \gg 0$, and $E^\infty_{p,q}$ is defined to be this stabilized group. The spectral sequence converges to $H_*(C_*)$ in the sense that there is a filtration
$$ F_0H_*(C_*) \le F_1H_*(C_*) \le F_2H_*(C_*) \le \cdots \le H_*(C_*) = \bigcup_p F_pH_*(C_*) $$
with
$$ \frac{F_pH_{p+q}(C_*)}{F_{p-1}H_{p+q}(C_*)} \cong E^\infty_{p,q}. $$
Thus the spectral sequence of the filtered chain complex interpolates between the homology of the associated graded
$$ H_*\left( \frac{F_\bullet C_*}{F_{\bullet-1}C_*}\right)$$
and the associated graded of the homology
$$ \frac{F_\bullet H_*(C_*)}{F_{\bullet-1}H_*(C_*)}. $$

\begin{defn}
The \emph{degree spectral sequence} is the first quadrant spectral sequence 
\[
E^1_{p,q}=H^{[p]}_{p+q}(G)\Longrightarrow H_{p+q}(G),
\]
associated to the filtered chain complex $\{C^k_*(G)\}_k$. 
\end{defn}

 The $E^1$-page of the  degree spectral sequence is shown in Figure \ref{fig_spseq}, where we write $H_{p+q}^{[p]}$  for $H_{p+q}^{[p]}  (G)$.

\begin{figure}[H]
  \centering
\begin{tikzpicture}
  \matrix (m) [matrix of math nodes,
    nodes in empty cells,nodes={minimum width=5ex,
    minimum height=5ex,outer sep=-5pt},
    column sep=1ex,row sep=1ex]{
    p~~     &  ~    & ~    &~     & ~   & ~   &\textcolor{white}{H_5^{[5]}} \\
          4     &  0 &  0  & 0 & 0 & H_4^{[4]} & \textcolor{white}{H_5^{[4]}}\\
          3     &  0 &  0  & 0 & H_3^{[3]} & H_4^{[3]} &\textcolor{white}{H_5^{[3]}} \\
          2     &  0 &  0  & H_2^{[2]} & H_3^{[2]} & H_4^{[2]} &\textcolor{white}{H_5^{[2]}} \\
          1     &  0 &  H_1^{[1]}  & H_2^{[1]} & H_3^{[1]} & H_4^{[1]} &\\
          0     &   H_0^{[0]} & 0 &  0  & 0 & 0 &\\
    \quad\strut &   0  &  1  &  2  & 3 & 4 & p+q &\strut \\};
    \draw[-stealth] (m-1-7.south west) -- node[pos=0.5, above left, xshift=3pt, yshift=-4pt] {$\scriptscriptstyle d_1$} (m-2-6.north east);
    \draw[-stealth] (m-2-7.south west) -- node[pos=0.5, above left, xshift=3pt, yshift=-4pt] {$\scriptscriptstyle d_1$} (m-3-6.north east);
    \draw[-stealth] (m-3-7.south west) -- node[pos=0.5, above left, xshift=3pt, yshift=-4pt] {$\scriptscriptstyle d_1$} (m-4-6.north east);
    \draw[-stealth] (m-4-7.south west) -- node[pos=0.5, above left, xshift=3pt, yshift=-4pt] {$\scriptscriptstyle d_1$} (m-5-6.north east);
    \draw[-stealth] (m-2-6.south west) -- node[pos=0.5, above left, xshift=3pt, yshift=-4pt] {$\scriptscriptstyle d_1$} (m-3-5.north east);
    \draw[-stealth] (m-3-5.south west) -- node[pos=0.5, above left, xshift=3pt, yshift=-4pt] {$\scriptscriptstyle d_1$} (m-4-4.north east);
    \draw[-stealth]  (m-4-4.south west) -- node[pos=0.5, above left, xshift=3pt, yshift=-4pt] {$\scriptscriptstyle d_1$} (m-5-3.north east);
    \draw[-stealth] (m-5-3.south west) -- node[pos=0.5, above left, xshift=3pt, yshift=-4pt] {$\scriptscriptstyle d_1$} (m-6-2.north east);
    \draw[-stealth] (m-3-6.south west) -- node[pos=0.5, above left, xshift=3pt, yshift=-4pt] {$\scriptscriptstyle d_1$} (m-4-5.north east);
    \draw[-stealth] (m-4-5.south west) -- node[pos=0.5, above left, xshift=3pt, yshift=-4pt] {$\scriptscriptstyle d_1$} (m-5-4.north east);
    \draw[-stealth] (m-4-6.south west) -- node[pos=0.5, above left, xshift=3pt, yshift=-4pt] {$\scriptscriptstyle d_1$} (m-5-5.north east);
\draw[thick] (m-1-1.east) -- (m-7-1.east) ;
\draw[thick] (m-7-1.north) -- (m-7-8.north west) ;
\end{tikzpicture}
  \caption{$E^1$-page of the degree spectral sequence}\label{fig_spseq}
\end{figure}

The $E^1$-page consists of the homology of the individual degree layers. The successive differentials describe how these layers interact, and the spectral sequence assembles this information to recover the discrete cubical homology of $G$. 

The edge of the $E^1$-page is the chain complex
\[
H^{[0]}_0(G)
\xleftarrow{d_1}
H^{[1]}_1(G)
\xleftarrow{d_1}
H^{[2]}_2(G)
\xleftarrow{d_1}
\cdots.
\]
Its homology forms the edge of the $E^2$-page. 

\begin{defn}\label{defn:Hinj}
Define the
\emph{injective homology} of $G$ to be this edge of the $E^2$-page:
\[
H_n^{\mathrm{inj}}(G) := H_n(H^{[\bullet]}_\bullet(G), d_1).
\]
\end{defn}

The term
\[
E^\infty_{n,0}(G)
\]
is the highest potentially non-trivial associated graded quotient of the filtration induced on
$H_n(G)$. Consequently, there are natural maps
\[
H_n(G)\twoheadrightarrow E^\infty_{n,0}(G)
\hookrightarrow H_n^{\mathrm{inj}}(G).
\]

\section{Degree-1 homology cycles in a triangle-free graph}\label{sec_H_n_1_equals_0}

In this section, our main goal is to prove that $H_n^{[1]}$ of a triangle-free graph is trivial for $n\geq 2$ (Theorem \ref{thm_H_2_1=0}). 
Since a triangle- and square-free graph has cubical dimension at most 1, the degree spectral sequence implies that 
this result extends the main result of \cite{barcelo2021SIAM}, which asserts that the discrete cubical homology of triangle- and square-free graphs is trivial in dimension $2$ and higher. 

To provide context and to clarify how the arguments of \cite{barcelo2021SIAM} carry over to our setting, we first give a streamlined approach to the main result  of~\cite{barcelo2021SIAM}.
The technique these authors used to prove Theorem~\ref{thm_SIAMpaper} is based on a combinatorial analog of the notion of covering space in the category of graphs.\footnote{This combinatorial notion of covering differs from the homotopical covering graph of \cite{kapulkin2025fundamental}, in that $\td{G}$ is always a forest.}

Given graph $G$, we will let $G_\RR$ denote the usual $1$-dimensional CW complex (topological graph) whose $0$-cells are given by the vertices of $G$, and $1$-cells are given by the edges of $G$.  Let
$$ p: \td{G}_\RR \to G_\RR $$
denote the universal cover.  The CW structure on $G_\RR$ lifts to a CW structure on $\td{G}_\RR$, giving $\td{G}_\RR$ the structure of a topological forest (a disjoint union of topological trees).  Let $\td{G}$ be the combinatorial forest associated to this topological forest.
Then the local homeomorphism $p$ arises from a local isomorphism of graphs    $$ \td{G} \to G. $$
If $G$ is path connected, $\td{G}$ is a tree, and the fundamental group
$$ \pi := \pi_1(G_\RR,v) $$
acts freely on $\td{G}$ through deck transformations. 
The following theorem is proven in \cite{barcelo2021SIAM}.

\begin{theorem}[{\cite{barcelo2021SIAM}}]\label{thm_SIAMpaper}
If $G$ is triangle-free and square-free, there is an isomorphism
$$ H_*(G) \cong H_*(G_\RR) $$
between the discrete cubical homology of $G$ and the singular homology of $G_\RR$.  In particular, 
$$ H_{\ge 2}(G) = 0. $$
\end{theorem}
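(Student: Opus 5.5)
We plan to use the combinatorial covering $q\colon \td{G}\to G$ described above. The argument has three steps: every singular cube lifts, homology of a forest is trivial in positive degrees, and the lifted chains can be pushed back down.

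\textbf{Lifting.} The key combinatorial input is that when $G$ is triangle-free and square-free, every singular cube $\sigma\colon I^n\to G$ lifts along $q$. Since $I^n$ is connected, the lifts are in bijection with choices of lift of $\sigma(0,\dots,0)$ over its vertex. The argument goes as follows.
\begin{itemize}
\item Every $2$-dimensional face of $I^n$ is a square.
\item Its image under $\sigma$ is a closed walk of length $4$ in which consecutive vertices are equal or adjacent.
\item Because $G$ has no $3$-cycles or $4$-cycles, the image of a square is a tree: a point, an edge, or a path of length $2$. Hence the closed walk is null-homotopic in $G_\RR$.
\item Since $\pi_1$ of the topological $2$-skeleton of $I^n$ is generated by its square faces, $\sigma_\RR$ kills $\pi_1$ of the $1$-skeleton. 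So $\sigma_\RR$ lifts to $\td{G}_\RR$.
\item The lift is cellular, so it gives a graph map $\td\sigma\colon I^n\to \td{G}$.
\end{itemize}

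\textbf{Chain level.} Assume $G$ is connected, with deck group $\pi$ acting freely on the tree $\td{G}$. Lifting then gives an isomorphism of chain complexes
$$ C_*(G)\cong C_*(\td{G})\otimes_{\ZZ\pi}\ZZ, $$
where $\pi$ acts freely on the nondegenerate cubes of $\td{G}$. Faces and degeneracies commute with lifting, and $\pi$ permutes the lifts simply transitively, so this identification is compatible with the boundary.

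\textbf{Main obstacle.} The hard part is showing that $C_*(\td{G})$ is a free resolution of $\ZZ$ over $\ZZ\pi$, i.e.\ that the discrete cubical homology of a tree $T$ is $\ZZ$ in degree $0$ and vanishes above. Once this holds, $H_*(G)=\mathrm{Tor}^{\ZZ\pi}_*(\ZZ,\ZZ)=H_*(\pi)=H_*(G_\RR)$, since $G_\RR$ is a $K(\pi,1)$ for the free group $\pi$. In particular $H_{\ge2}(G)=0$.

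\begin{itemize}
\item \emph{Finite trees.} Induct on the number of vertices. Removing a leaf $\ell$ with neighbor $u$ gives a retraction $r\colon T\to T\setminus\ell$ sending $\ell\mapsto u$. The map $r$ is discrete-homotopic to the identity of $T$ via the graph map $T\times I^1\to T$ that sends $(\ell,0)\mapsto \ell$ and everything else through $r$. Discrete homotopy invariance of cubical homology (a standard fact of \cite{barcelo2014_discrete_metric}) then gives $H_*(T)\cong H_*(T\setminus\ell)$, down to a point.
\item \emph{Infinite trees.} Every cube has finite image, so each cycle lies in a finite subtree. Chains therefore form a filtered colimit over finite subtrees, and homology commutes with filtered colimits.
\item \emph{Disconnected $G$.} The general statement follows by summing over components.
\end{itemize}

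If one prefers to avoid group homology, the remaining step is the same: extend scalars through the free action and use that a free $\ZZ\pi$-resolution of $\ZZ$ computes $H_*(G_\RR)$.
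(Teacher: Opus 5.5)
Your proposal is correct and follows essentially the same route as the paper. Both arguments lift every singular cube to the covering tree $\td{G}$, identify $C_*(G)\cong C_*(\td{G})\otimes_{\ZZ[\pi]}\ZZ$ with $C_*(\td{G})$ a free $\ZZ[\pi]$-resolution of $\ZZ$ (finite trees are discretely contractible and $H_*(\td{G})$ is a colimit over finite subtrees), and conclude $H_*(G)\cong H_*(\pi;\ZZ)\cong H_*(G_\RR)$. You add more detail than the paper, which cites \cite[Lem.~4.3]{barcelo2021SIAM} for lifting where you give the square-face $\pi_1$ argument (the paper's later Lemma~\ref{lem:pi1} and Theorem~\ref{thm_LemmaSIAMpaper_modified}; it should say $\pi_1(I^n_\RR)$ is normally generated by the square boundaries) and which leaves the leaf-removal homotopy implicit.
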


\begin{proof}
We present a different proof which shares the philosophy of \cite{barcelo2021SIAM} in its use of the covering graph $\td{G}$. Without loss of generality, assume $G$ is path connected.
Let
$$ \sigma : I^n \to G $$
denote a singular $n$-cube.
By \cite[Lem.~4.3(3-4)]{barcelo2021SIAM}, since $G$ is triangle-free and square-free, a lift
$$ \td{\sigma} : I^n \to \td{G} $$
is uniquely determined by a lift of its base vertex.  Thus the group $\pi$ acts freely and transitively on the collection of lifts, and there is an isomorphism
\begin{equation}\label{eq:piequiv}
C_* (G) \cong C_*(\td{G}) \otimes_{\ZZ[\pi]} \ZZ.
\end{equation}
By our remarks above, $C_*(\td{G})$ is free as a $\ZZ[\pi]$-module.  Since $\td{G}$ is a tree, it is acyclic, and we therefore have
$$ H_*(C_*(\td{G})) = H_*(\td{G}) \cong 
\begin{cases}
\ZZ, & * = 0, \\
0, & * > 0.
\end{cases}
$$

This is because $\td{G}$
is a union of finite trees $\td{G}_i$, each of the $\td{G}_i$ is discretely contractible, and 
$$ H_*(\td{G}) \cong \varinjlim_i H_*(\td{G}_i). $$
It follows that $C_*(\td{G})$ is a free $\ZZ[\pi]$-resolution of $\ZZ$, and hence by (\ref{eq:piequiv})
$$ H_*(G) \cong H_*(\pi; \ZZ) \cong H_*(G_\RR). $$
\end{proof}

In Theorem~\ref{thm_SIAMpaper}, the  graph $G$ is required to be triangle- and square-free because this is the condition required to guarantee the existence of lifts of all singular $n$-cubes.  Topologically, the obstruction to lifting a singular $n$-cube
$$ \sigma: I^n \to G $$
is the non-triviality of the image of the induced map
$$ \sigma_*: \pi_1(I^n_\RR) \to \pi_1(G_\RR). $$

\begin{lemma}\label{lem:pi1}
The fundamental group $\pi_1(I^n_\RR)$ is generated by the images
$$ \pi_1(I^2_\RR) \to \pi_1(I^n_\RR) $$
arising from the $2$-dimensional faces
$$ f: I^2 \hookrightarrow I^n $$
of $I^n$.
\end{lemma}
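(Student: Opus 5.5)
The plan is to use the fact that the $2$-skeleton of the standard cubical CW structure on the solid cube $[0,1]^n$ is simply connected. Write $X=[0,1]^n$ with its usual cubical cell structure, so that $I^n_\RR$ is exactly its $1$-skeleton $X^{(1)}$. Let $X^{(2)}$ be its $2$-skeleton, obtained from $I^n_\RR$ by attaching one $2$-cell for each $2$-dimensional face $f\colon I^2\hookrightarrow I^n$. The attaching map of that $2$-cell is the boundary $4$-cycle of the face, i.e.\ a loop representing a generator of $\pi_1(I^2_\RR)\cong\ZZ$ pushed forward along $f$.

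The first step is the standard cellular fact that $\pi_1(X^{(2)})=\pi_1(X)$, since attaching cells of dimension $\ge 3$ does not change $\pi_1$. Because $X$ is contractible, $X^{(2)}$ is therefore simply connected.

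The second step applies van Kampen to the attachment of the $2$-cells. It shows that $\pi_1(X^{(2)})$ is the quotient of $\pi_1(I^n_\RR)$ by the normal subgroup $N$ generated by the classes of the attaching loops. Since $X^{(2)}$ is simply connected, $N=\pi_1(I^n_\RR)$, so $\pi_1(I^n_\RR)$ is \emph{normally} generated by the images of the maps $f_*\colon\pi_1(I^2_\RR)\to\pi_1(I^n_\RR)$. Here each attaching loop is connected to the basepoint by a path, and changing that path only conjugates its class.

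The remaining issue, and the one point needing care, is to pass from normal generation to honest generation; this is where I expect the main obstacle to lie. I would interpret ``the images'' basepoint-freely: for each face $f$, choose a path $\gamma$ in $I^n_\RR$ from the basepoint $v$ to $f(0,0)$, and use the induced map $\pi_1(I^2_\RR,(0,0))\to\pi_1(I^n_\RR,v)$ given by $\alpha\mapsto\gamma\cdot f_*(\alpha)\cdot\gamma^{-1}$. Every conjugate $g\,[\partial f]\,g^{-1}$, with $g\in\pi_1(I^n_\RR,v)$, is again of this form: replace $\gamma$ by the concatenation of a loop representing $g$ with $\gamma$. Hence the union of all such images, over all faces and all connecting paths, is closed under conjugation, and the subgroup it generates equals the normal subgroup $N$, which is all of $\pi_1(I^n_\RR)$. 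This choice-of-paths reading is also the one relevant to the paper's later use, since the lifting obstruction for $\sigma$ is basepoint-independent. So it suffices to check that every $2$-face loop $\sigma_*$ is trivial.
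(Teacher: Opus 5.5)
Your proof is correct and is essentially the paper's argument: both use that the $2$-cells of the contractible solid cube $\Box^n$ are attached to $I^n_\RR$ along the $2$-face loops, so van Kampen shows that these loops normally generate $\pi_1(I^n_\RR)$. The only difference is basepoint bookkeeping: the paper collapses a maximal tree $T$ to fix one connecting path per face, and its quotient by $\bra{\Img f_*}$ (necessarily a normal closure) literally yields only normal generation, which suffices for Theorem~\ref{thm_LemmaSIAMpaper_modified}; you instead allow all connecting paths, which makes the set of images closed under conjugation and so legitimately upgrades normal generation to generation.
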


\begin{proof}
To alleviate any ambiguities arising from basepoints, choose a maximal tree $T \subseteq I^n$, and let $I^n_\RR/T_\RR$ be the quotient with basepoint $\ast$.  Since $I^n_\RR \to I^n_\RR/T_\RR$ is a homotopy equivalence, it suffices to show that $\pi_1(I^n_\RR/T_\RR)$ is generated by the images of the composites
$$ \pi_1(I^2_\RR,0) \xrightarrow{f_*} \pi_1(I^n_\RR, f(0)) \to \pi_1(I^n_\RR/T_\RR, \ast). $$
Let $\Box^n$ be the solid topological $n$-cube (contractible).  The Van Kampen theorem implies that
since $I^n_\RR$ is the $1$-skeleton of $\Box^n$, and the $2$-cells of $\Box^n$ are its $2$-dimensional faces, we have 
$$ 0 = \pi_1(\Box^n/T_\RR, \ast) = \pi_1(I^n_\RR/T_\RR,\ast)/\bra{\Img f_* \: : \: \text{$f$ a $2$-dimensional face}}. $$
The result follows.
\end{proof}

We deduce the following (compare with \cite[Lem.~4.3(3-4)]{barcelo2021SIAM}).

\begin{theorem}\label{thm_LemmaSIAMpaper_modified}
    For a graph $G$, let 
    $$\sigma \colon I^n \to G$$ 
    be a singular $n$-cube. Suppose that for every 2-dimensional face $\tau$ of $\sigma$, $\Img (\tau)$ is not a $3$- or $4$-cycle. Then the lifting problem shown below as the solid diagram has a unique solution:     
    \begin{center}
    \begin{tikzcd}
    I^0 \arrow[r, "\widetilde{q}"] \arrow[d, "0"'] & \td{G} \arrow[d] \\
    I^n \arrow[r, "\sigma"]\arrow[ur, dashed, "\exists!~\td{\sigma}"] & G.
    \end{tikzcd}    
    \end{center}
\end{theorem}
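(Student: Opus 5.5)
The plan is to pass to topology: $\sigma$ induces a cellular map $\sigma_\RR\colon I^n_\RR \to G_\RR$, which may collapse edges to vertices. Covering space theory for $p\colon \td{G}_\RR \to G_\RR$ then gives a lift through $\td q$ exactly when $\sigma_*\pi_1(I^n_\RR,0) = 1$. Since $I^n_\RR$ is connected, such a lift is unique once its value at $0$ is fixed. Because $p$ comes from a local isomorphism of graphs and $\sigma_\RR$ is cellular, a topological lift sends vertices to vertices. Adjacent vertices go either to one vertex (a collapsed edge) or to the two endpoints of an edge of $\td G$ (a lifted edge). So the topological lift restricts to a graph map $\td\sigma\colon I^n \to \td G$.

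Uniqueness can also be seen directly. The lift of a vertex $v$ adjacent to an already-lifted vertex $w$ is forced: it is either $\td\sigma(w)$, if $\sigma(v)=\sigma(w)$, or the unique neighbor of $\td\sigma(w)$ lying over $\sigma(v)$, since $p$ restricts to an isomorphism $N(\td\sigma(w)) \to N(\sigma(w))$. Connectivity of $I^n$ then determines $\td\sigma$ everywhere.

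For existence, I use Lemma~\ref{lem:pi1}. It reduces the problem to showing that for every $2$-dimensional face $f\colon I^2 \hookrightarrow I^n$, the composite $\pi_1(I^2_\RR) \to \pi_1(G_\RR)$ is trivial, where $\tau=\sigma\circ f$. Basepoints are handled by connecting paths as in the proof of that lemma; triviality of the image does not depend on these choices. The square $I^2_\RR$ is a $4$-cycle, and $\tau$ sends it to a closed walk $v_0v_1v_2v_3v_0$ in $G$, where consecutive entries are equal or adjacent.

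The key step, and the main obstacle, is a case analysis showing that this closed walk of length at most $4$ is null-homotopic in $G_\RR$ whenever $\Img(\tau)$ is not a $3$- or $4$-cycle. I delete repeated consecutive vertices, since collapsed edges are constant paths, and obtain a reduced closed walk of length $0$, $2$, $3$ or $4$.
\begin{itemize}
\item Length $0$ or $2$ (of the form $aba$): the walk backtracks, so it is null-homotopic.
\item Length $3$: the three vertices are pairwise distinct, since $G$ has no loops, so the image is a triangle. This is excluded.
\item Length $4$, i.e.\ $abcd a$: if $a=c$ or $b=d$, the walk reduces by backtracking to something shorter and hence trivial. Otherwise the four vertices are distinct and the image is a $4$-cycle, which is excluded.
\end{itemize}
In each allowed case the loop is trivial in $\pi_1(G_\RR)$, as is its image in $\pi_1(\td G_\RR)$ after transport. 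Hence $\sigma_*=1$ on $\pi_1(I^n_\RR)$.

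Finally, I apply the lifting criterion to obtain the topological lift $\td\sigma_\RR$ with $\td\sigma_\RR(0)=\td q$. By the first paragraph it restricts to the desired graph map $\td\sigma$, and uniqueness follows from the direct argument above.
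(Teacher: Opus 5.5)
Your proposal is correct and follows the same route as the paper. Like the paper, you reduce via Lemma~\ref{lem:pi1} to the $2$-dimensional faces, apply the covering-space lifting criterion to $\sigma_\RR$, and observe that the topological lift underlies a graph map. The differences are that you spell out the case analysis of reduced closed walks of length at most $4$, which the paper compresses into ``the hypothesis implies,'' and that you prove uniqueness directly from the local isomorphism $N(\td\sigma(w)) \to N(\sigma(w))$ and connectivity of $I^n$, which avoids having to check that every graph lift realizes to a topological lift.
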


\begin{proof}
The hypothesis implies that for any $2$-dimensional face
$$ I^2 \hookrightarrow I^n $$
the induced composite
$$ \pi_1(I^2_\RR) \to \pi_1(I^n_\RR) \to \pi_1(G_\RR) $$
is trivial.  By Lemma~\ref{lem:pi1}, it follows that
$$ \sigma_*: \pi_1(I^n_\RR) \to \pi_1(G_\RR) $$
is trivial.  Thus covering space theory implies 
$$ \sigma_\RR : I^n_\RR \to G_\RR $$
lifts uniquely to 
a map
$$ \td{\sigma}_\RR: I^n_\RR \to \td{G}_\RR $$ 

with $\td{\sigma}_\RR(0) = \td{q}$.  Because $\td{\sigma}_\RR$ is a lift of the topological graph map $\sigma_\RR$, and because the topological graph structure on $\td{G}_\RR$ is lifted from $G_\RR$, we deduce that there exists a unique graph map
$$ \td{\sigma}: I^n \to \td{G} $$
which underlies $\td{\sigma}_\RR$.
\end{proof}

\begin{corollary}\label{cor:lift}
Suppose that $G$ is triangle-free.  Then lifts of a degree-$1$ singular $n$-cube
$$ \sigma: I^n \to G $$
to $\td{G}$ are determined by lifts of $\sigma(0)$.
\end{corollary}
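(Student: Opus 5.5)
We reduce the statement to Theorem~\ref{thm_LemmaSIAMpaper_modified}. It suffices to show that if $G$ is triangle-free and $\sigma\colon I^n\to G$ has degree $1$, then no $2$-dimensional face $\tau$ of $\sigma$ has image a $3$- or $4$-cycle. The theorem then gives existence and uniqueness of a lift $\td{\sigma}$ once a lift $\td{q}$ of $\sigma(0)$ is chosen. Conversely, any lift of $\sigma$ restricts at $0$ to a lift of $\sigma(0)$, so lifts of $\sigma$ are in bijection with lifts of $\sigma(0)$.

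To verify the hypothesis, fix a $2$-dimensional face $\tau\colon I^2\to G$ of $\sigma$. Since $\sigma$ has degree $1$, its maximal injective facet has dimension $1$. In particular, no $2$-dimensional face of $\sigma$ is injective, because an injective $2$-face would force degree at least $2$. (We read the degree as the maximal dimension of an injective face of $\sigma$ of any dimension, which is how the paper's remarks use it.) So $\tau$ is not injective on the four vertices $00,01,10,11$ of $I^2$, and its image has at most $3$ vertices. Hence $\Img(\tau)$ cannot be a $4$-cycle.

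It remains to rule out a $3$-cycle. If $\Img(\tau)$ were a $3$-cycle it would be a triangle in $G$, contradicting triangle-freeness. Note that $\Img(\tau)$ is by definition a subgraph of $G$, so a $3$-cycle image means the three edges among three distinct vertices all lie in $G$. Therefore every $2$-face satisfies the hypothesis of Theorem~\ref{thm_LemmaSIAMpaper_modified}, and the corollary follows.

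The only delicate point, and the step I expect to be the main obstacle, is the interpretation of ``degree'' via facets: we must confirm that degree $1$ rules out injective $2$-faces. If ``facet'' meant only codimension-one faces, then for $n\ge 3$ we would instead argue that an injective $2$-face sits inside some codimension-one face, which would then have an injective face of dimension at least $2$. I would need to check that the paper's convention, that degree is monotone under taking faces, makes this consistent; the paper's remark that ``every face of $\sigma$ has degree at most $k$'' indicates it does. With that settled, the argument is immediate.
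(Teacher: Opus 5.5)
Your proposal is correct and is exactly the deduction the paper intends from Theorem~\ref{thm_LemmaSIAMpaper_modified}: degree $1$ rules out injective $2$-faces, hence $4$-cycle images, and triangle-freeness rules out $3$-cycle images, so the theorem gives a unique lift for each choice of lift of $\sigma(0)$. Your worry about ``facet'' is settled by the paper's remark that degree-$n$ singular $n$-cubes are exactly the injective ones, so faces of every dimension count toward the degree.
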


We deduce the following generalization of Theorem~\ref{thm_SIAMpaper}\footnote{Note that if $G$ is square-free, then $C^1_*(G) = C_*(G)$.}

\begin{prop}\label{prop:C_*^1}
If $G$ is triangle-free, then there is an isomorphism
$$ H_*(C_*^1(G)) \cong H_*(G_\RR). $$
\end{prop}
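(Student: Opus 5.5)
The plan is to mimic the proof of Theorem~\ref{thm_SIAMpaper}, replacing $C_*(G)$ by the degree-$\le 1$ subcomplex $C^1_*(G)$. Assume $G$ is path connected (in general, decompose into components). Every singular $n$-cube in $C^1_*(G)$ is non-degenerate of degree at most $1$. Its $2$-dimensional faces then have degree at most $1$, so no $2$-face of such a cube is injective.

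First, check that the hypothesis of Theorem~\ref{thm_LemmaSIAMpaper_modified} holds. A $2$-face $\tau$ with $\Img(\tau)$ a $4$-cycle would be injective, which is impossible since $\tau$ has degree at most $1$. A $2$-face with $\Img(\tau)$ a $3$-cycle is impossible because $G$ is triangle-free. This is precisely Corollary~\ref{cor:lift}. Hence each degree-$\le 1$ cube $\sigma$ has, for each lift of $\sigma(0)$, a unique lift $\td{\sigma}\colon I^n \to \td{G}$.

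Next, compare degrees of a cube and its lifts. Since $p\colon \td{G}\to G$ is a graph map, a face of $\td\sigma$ is injective only if... this direction is actually the wrong one. The correct point is that $p\circ$(face of $\td\sigma$) is the corresponding face of $\sigma$. So if a face of $\sigma$ is injective, the same face of $\td\sigma$ is injective, giving $\deg\td\sigma \ge \deg\sigma$.

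For the reverse inequality, take an injective face of $\td\sigma$ of dimension $k\ge 2$. It contains an injective $2$-face, i.e.\ an injective $4$-cycle in $\td G$. This contradicts $\td G$ being a forest. So $\deg\td\sigma\le 1$, and lifting is compatible with the filtration. Degeneracy is preserved in both directions: $f_i^-\td\sigma=f_i^+\td\sigma$ if and only if $f_i^-\sigma=f_i^+\sigma$. The forward implication is clear. For the converse, both faces are lifts of the same cube with the same base-vertex lift, since the edge in direction $i$ at the origin maps to a vertex. Uniqueness of lifts then gives equality.

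It follows that $\pi$ acts freely and transitively on the lifts of each generator, and lifting commutes with faces. Therefore
$$ C^1_*(G) \cong C^1_*(\td G)\otimes_{\ZZ[\pi]}\ZZ, $$
with $C^1_*(\td G)$ a free $\ZZ[\pi]$-module. Moreover $C^1_*(\td G)=C_*(\td G)$, because $\td G$ is a forest and hence has no injective $2$-faces, so every cube in $\td G$ has degree at most $1$.

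As in Theorem~\ref{thm_SIAMpaper}, $\td G$ is a tree exhausted by finite discretely contractible subtrees, so $C_*(\td G)$ is a free resolution of $\ZZ$. Consequently
$$ H_*(C^1_*(G))\cong H_*(\pi;\ZZ)\cong H_*(G_\RR). $$

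The main obstacle is the filtration bookkeeping: verifying that the lift correspondence is a bijection between non-degenerate degree-$\le1$ cubes of $G$ and $\pi$-orbits of non-degenerate cubes of $\td G$, including the degeneracy and degree checks above. Once that bijection is established, the homological algebra is identical to the proof of Theorem~\ref{thm_SIAMpaper}.
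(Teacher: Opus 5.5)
Your proposal is correct and follows essentially the same route as the paper. You reduce to the connected case, use Corollary~\ref{cor:lift} to get $C^1_*(G)\cong C^1_*(\td G)\otimes_{\ZZ[\pi]}\ZZ$ with $C^1_*(\td G)=C_*(\td G)$ free over $\ZZ[\pi]$, and use acyclicity of the tree to conclude $H_*(C^1_*(G))\cong H_*(\pi;\ZZ)\cong H_*(G_\RR)$. Your extra checks are sound and are exactly the bookkeeping the paper leaves implicit: that $\deg(p\circ\td\sigma)\le\deg\td\sigma\le 1$, and that a cube is degenerate upstairs if and only if it is degenerate downstairs.
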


\begin{proof}
Since
$$ C^1_*\left( \coprod_i G_i \right) \cong \bigoplus_i C_*^1(G_i) $$
we may assume without loss of generality that $G$ is path connected.  Let
$$ \pi := \pi_1(G_\RR). $$
By Corollary~\ref{cor:lift}, $C^1_*(\td{G})$ is a chain complex of free $\ZZ[\pi]$-modules, and we have 
$$ C^1_*(G) \cong C^1_*(\td{G}) \otimes_{\ZZ[\pi]} \ZZ. $$
Furthermore, since $\td{G}$ is a tree, we have
$$ C_*^1(\td{G}) = C_*(\td{G}) $$
and, like in the proof of Theorem~\ref{thm_SIAMpaper}, we deduce
$$ H_*(C^1_*(\td{G})) = H_*(\td{G}) = \begin{cases}
\ZZ, & * = 0, \\
0, & * > 0.
\end{cases}
$$
It follows that $C^1_*(\td{G})$ is a free $\ZZ[\pi]$-resolution of $\ZZ$, and we have
$$ H_*(C^1_*(G)) \cong H_*(\pi; \ZZ) \cong H_*(G_\RR). $$
\end{proof}

The main result of this section is the following:

\begin{theorem}\label{thm_H_2_1=0}
Let $G$ be a triangle-free graph. Then we have
$$ H^{[1]}_{\ge 2}(G) = 0. $$
\end{theorem}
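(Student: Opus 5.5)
The plan is to use the short exact sequence of chain complexes
$$ 0 \to C^0_*(G) \to C^1_*(G) \to C^{[1]}_*(G) \to 0 $$
coming from the degree filtration, together with Proposition~\ref{prop:C_*^1}, which computes $H_*(C^1_*(G)) \cong H_*(G_\RR)$ for $G$ triangle-free. First I will identify $C^0_*(G)$. A degree-$0$ singular $n$-cube has no injective $1$-dimensional face, so every edge of $I^n$ is collapsed and $\sigma$ is constant. For $n \ge 1$ a constant cube is degenerate, and hence vanishes in the normalized complex. Therefore $C^0_*(G)$ is $\ZZ V(G)$ concentrated in degree $0$, with zero differential. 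In particular $H_n(C^0_*(G)) = 0$ for $n \ge 1$.

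The long exact sequence in homology then gives, for $n \ge 2$,
$$ H_n(C^0_*) = 0 \to H_n(C^1_*(G)) \to H^{[1]}_n(G) \to H_{n-1}(C^0_*(G)) = 0, $$
since $n-1 \ge 1$. Hence $H^{[1]}_n(G) \cong H_n(C^1_*(G)) \cong H_n(G_\RR)$ by Proposition~\ref{prop:C_*^1}. The latter group vanishes for $n \ge 2$, because $G_\RR$ is a $1$-dimensional CW complex; equivalently, each component has free fundamental group $\pi$, and free groups have homological dimension at most $1$.

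The only step needing care is the vanishing of $C^0_n(G)$ for $n \ge 1$, which is immediate from the definitions. So the main content lies entirely in Proposition~\ref{prop:C_*^1}, which we are allowed to assume, and in particular in its use of Corollary~\ref{cor:lift} to obtain a free $\ZZ[\pi]$-resolution. If one wanted a self-contained argument, the point to check is that $C^1_*(\td G)=C_*(\td G)$ is acyclic and free over $\ZZ[\pi]$. There one must verify that the degree-$1$ condition is preserved under lifting: lifts are local isomorphisms, so they preserve injectivity of edges and thus degree.
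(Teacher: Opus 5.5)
Your proposal is correct and is essentially the paper's proof: the long exact sequence of $0 \to C^0_*(G) \to C^1_*(G) \to C^{[1]}_*(G) \to 0$, combined with Proposition~\ref{prop:C_*^1}, $H_{\ge 2}(G_\RR)=0$, and $H^{[0]}_{\ge 1}(G)=0$. You also justify the last fact correctly: degree-$0$ cubes are constant, hence degenerate in positive dimensions.
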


\begin{proof}
Consider the long exact sequence
$$ \cdots H_n^{[0]}(G) \to H_n(G_\RR) \to H_n^{[1]}(G) \to H_{n-1}^{[0]}(G) \cdots $$
associated to the short exact sequence
$$ 0 \to C_*^0(G) \to C_*^1(G) \to C^{[1]}_*(G) \to 0 $$
and Proposition~\ref{prop:C_*^1}. The result follows from the fact that $H_{\ge 2}(G_\RR) = 0$ and $H^{[0]}_{\ge 1}(G) = 0$.
\end{proof}

The degree spectral sequence immediately gives the following corollary.

\begin{corollary}\label{cor_2_homology_no_3_cycles}
For a triangle-free graph $G$, we have
    \[H_2 (G)\cong\frac{\ker ( H_2^{[2]}  (G)\to H_1^{[1]}  (G))}{\Img(H_3^{[3]}  (G)\to H_2^{[2]}  (G))} = H^{\mathit{inj}}_2(G).\]
Furthermore, for a triangle-free graph $G$ of cubical dimension $2$, $H_{\geq 3}(G)\cong H_{\geq 3}^{[2]}(G)$. 
\end{corollary}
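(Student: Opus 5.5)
The plan is to read everything off the degree spectral sequence $E^1_{p,q}=H^{[p]}_{p+q}(G)\Rightarrow H_{p+q}(G)$. The input is the vanishing information we already have. First, $H^{[0]}_{\ge 1}(G)=0$, since $C^{[0]}_*$ is spanned by the constant cubes and these are degenerate in positive dimension. Second, $H^{[1]}_{\ge 2}(G)=0$ by Theorem~\ref{thm_H_2_1=0}, which uses the triangle-free hypothesis. Third, $H^{[p]}_{n}(G)=0$ for $p>n$, because $C^{[p]}_n=0$ when $p>n$. So in the $(p,q)$-plane, the only possibly nonzero $E^1$ terms are $E^1_{0,0}$, $E^1_{1,0}$, and the terms $E^1_{p,q}$ with $p\ge 2$, $q\ge 0$.

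For total degree $2$, the only surviving terms are $E^1_{2,0}=H^{[2]}_2$ and $E^1_{1,1}=H^{[1]}_2=0$. Also $E^1_{0,2}=0$, and the terms $E^1_{p,2-p}$ with $p\ge 3$ vanish because $q<0$. Hence the filtration on $H_2(G)$ has a single nonzero quotient, and $H_2(G)\cong E^\infty_{2,0}$.

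Next I would check that $E^2_{2,0}=E^\infty_{2,0}$.
\begin{itemize}
\item Outgoing differentials $d_r\colon E^r_{2,0}\to E^r_{2-r,r-1}$ for $r\ge 2$ land in $E_{0,1}$ (zero, since it is $H^{[0]}_1$) or in negative $p$.
\item Incoming differentials $d_r\colon E^r_{2+r,1-r}\to E^r_{2,0}$ have $q=1-r<0$ for $r\ge 2$, so they are zero.
\end{itemize}
Thus $E^\infty_{2,0}=E^2_{2,0}$. By definition, $E^2_{2,0}$ is the homology of $H^{[3]}_3\to H^{[2]}_2\to H^{[1]}_1$ under $d_1$, which is $H^{\mathit{inj}}_2(G)$. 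This is the displayed formula, and in particular the edge maps in (\ref{eq:edge}) are isomorphisms for $n=2$. The only subtle point is the bookkeeping that no $d_r$ with $r\ge 2$ can reach the $(2,0)$ spot. This is easy because the $q<0$ region is empty, but I would state it explicitly.

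For cubical dimension $2$, we additionally have $E^1_{p,q}=0$ for $p>2$. For $n\ge 3$ the row $p=1$ contributes nothing, since $H^{[1]}_n=0$, and the row $p=0$ contributes nothing. So the only term of total degree $n$ is $E^1_{2,n-2}=H^{[2]}_n$. Any differential out of it goes to $E_{2-r,n-3+r}$ with $r\ge 1$:
\begin{itemize}
\item for $r=1$ this is $H^{[1]}_{n-1}$, which is zero since $n-1\ge 2$;
\item for $r=2$ it is $E_{0,n-1}=H^{[0]}_{n-1}=0$;
\item for $r\ge 3$ it has $p<0$.
\end{itemize}
Any differential into it comes from $p=2+r>2$, which is zero. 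Hence $E^\infty_{2,n-2}=H^{[2]}_n$, and it is the only nonzero graded piece of $H_n(G)$. Therefore $H_n(G)\cong H^{[2]}_n(G)$ for $n\ge 3$.
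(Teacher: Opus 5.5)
Your proposal is correct and is the argument the paper means when it says the corollary follows immediately from the degree spectral sequence. You use the vanishing $H^{[0]}_{\ge 1}=0$, $H^{[1]}_{\ge 2}=0$ (Theorem~\ref{thm_H_2_1=0}), $E^1_{p,q}=0$ for $q<0$, and $E^1_{p,q}=0$ for $p>2$ in cubical dimension $2$, to isolate a single surviving term in each relevant total degree that no $d_r$ with $r\ge 2$ can enter or leave; your explicit differential bookkeeping just writes out what the paper leaves implicit.
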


\section{$H_n^{[n]}$ of $n$-quasimonophobic graphs}\label{sec_injective_cycles}
In this section, we show that for a certain class of graphs, which we call
\emph{$n$-quasimonophobic graphs}, the homology group $H_n^{[n]}(G)$ admits a
particularly simple description. We begin by introducing the notion of
$n$-quasimonophobic graphs.

The concept of \emph{monophobic cube neighborhoods} was introduced by Greene and the first author
in  \cite{monophobic} as a sufficient condition for nontriviality of certain elements in the discrete homotopy
groups and in the discrete cubical homology groups of a graph.  
In the present section, we give the definition of $n$-quasimonophobic graphs
and develop several results building on those in \cite{monophobic},
to facilitate
the computation of $H_n^{[n]}$ for $n$-quasimonophobic graphs.

An \textit{$n$-cube} in a graph is a subgraph isomorphic to $I^n$, whereas a \textit{rigid $n$-cube} in a graph is an \emph{induced} subgraph isomorphic to
$I^n$. A singular $n$-cube $\sigma$ is said
to be \textit{supported by} a [rigid] $n$-cube if the image of $\sigma$ is equal
to that cube. 

A rigid $n$-cube $Q$ is \textit{monophobic} in a graph $G$ if,
whenever a singular $(n+1)$-cube contains a face supported by $Q$, it also
contains at least one additional face supported by $Q$.  This relates to the framework of \cite{monophobic} in the sense that a rigid $n$-cube $Q\subseteq G$ is monophobic if and only if it possesses a monophobic neighborhood.

We shall say that a rigid $n$-cube $Q$ is \emph{quasimonophobic} in a graph $G$ if,
whenever a \underline{noninjective} singular $(n+1)$-cube contains a face supported by $Q$, it also
contains at least one additional face supported by $Q$.

\begin{defn}[$n$-monophobic graph]
\label{defn_monophobic}
Define a graph $G$ to be \emph{$n$-monophobic} if every $n$-cube $Q$ in $G$ is rigid and monophobic.
\end{defn}  

\begin{defn}[$n$-quasimonophobic graph]\label{defn_quasimonophobic}
Define a graph $G$ to be \emph{$n$-quasimonophobic} if every $n$-cube $Q$ in $G$ is rigid and quasimonophobic.  We shall say it is \emph{quasimonophobic} if it is $n$-quasimonophobic for every $n$.
\end{defn}  

Note the following:
\begin{itemize}
\item If a graph is $n$-monophobic, then it is $n$-quasimonophobic. 
\item Every graph is $0$-quasimonophobic, but the only graphs which are $0$-monophobic are those without edges.
\item A graph is $1$-monophobic if and only if it is triangle-free and square-free, and it is $1$-quasimonophobic if and only if it is triangle-free.
\item If $G$ has cubical dimension $d$, it is tautologically $(>d)$-monophobic (and therefore $(>d)$-quasimonophobic).
\end{itemize}

We give a few basic examples of $n$-(quasi)monophobic graphs from
\cite{monophobic}. The cycle graph $C_4$ is $2$-(quasi)monophobic,
and, more generally, the discrete $n$-cube $I^n$ is
$n$-(quasi)monophobic. On the other hand, the complete bipartite
graph $K_{2,3}$ is not $2$-(quasi)monophobic; see
Example~\ref{eg_K23}. More examples and details can be found in
\cite{monophobic}.

In this section, we focus on the degree-$n$ quotient complex $C^{[n]}_*(G)$ associated with an $n$-quasimonophobic graph $G$ and are primarily concerned with its homology in dimension $n$:
$$ H_n^{[n]}(G) = H_n(C_*^{[n]}(G)). $$
All boundary computations involving the boundary operator
\[
\partial_{n+1}^{[n]}:C_{n+1}^{[n]}(G)\longrightarrow C_n^{[n]}(G)
\]
are understood to be taken modulo singular cubes of degree less than $n$.
Since degree-$n$ singular $n$-cubes are precisely the injective singular
$n$-cubes, only injective singular $n$-cubes survive in
$C_n^{[n]}(G)$ and hence can contribute to $H_n^{[n]}(G)$.

Now, we proceed to show that $H_n^{[n]}$ of $n$-quasimonophobic graphs is given by $C_n^{[n]}(G)$ modulo the equivalence relation defined in the following theorem.

\begin{theorem}\label{thm:homologous}
Let $Q$ be an $n$-cube in any graph $G$, and let $\sigma$ and $\gamma$ be singular $n$-cubes in $G$  with
$
\mathrm{Im}(\sigma) = \mathrm{Im}(\gamma) = Q.
$
Then $\sigma$ is homologous to $\gamma$ up to a sign in $C_n^{[n]}$.    
\end{theorem}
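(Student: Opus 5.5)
The plan is to reduce to automorphisms of $I^n$, and then to realize each generator of the automorphism group as a boundary relation in $C^{[n]}_*$. Since $\sigma,\gamma\colon I^n\to G$ both have image $Q\cong I^n$, and $Q$ has $2^n$ vertices, both maps are bijective on vertices. Moreover $Q$ has exactly $n2^{n-1}$ edges, which is the number of edges of $I^n$, so both are isomorphisms $I^n\to Q$. Hence $\gamma=\sigma\circ\varphi$ for some graph automorphism $\varphi$ of $I^n$. The automorphism group of $I^n$ is the hyperoctahedral group. It is generated by the coordinate reflections $r_i$ (replace $x_i$ by $1-x_i$) and the adjacent transpositions $s_i$ (swap $x_i$ and $x_{i+1}$). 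So it suffices to show $\sigma\circ\varphi\sim\pm\sigma$ in $C^{[n]}_n$ for $\varphi=r_i$ and $\varphi=s_i$, where $\sigma$ is an arbitrary injective singular $n$-cube. Composing these relations then gives the theorem.

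\textbf{Reflections.} Precompose $\sigma$ with a graph map $I^{n+1}\to I^n$ that uses coordinates $x_i,x_{i+1}$ to produce the new $i$th coordinate $\max(x_i,1-x_{i+1})$ and shifts the remaining coordinates. Concretely, set
\[
\tau(x_1,\dots,x_{n+1})=\sigma\bigl(x_1,\dots,x_{i-1},\max(x_i,1-x_{i+1}),x_{i+2},\dots,x_{n+1}\bigr).
\]
This is a graph map, because $(a,b)\mapsto\max(a,1-b)$ is a graph map $I^2\to I^1$. The faces of $\tau$ behave as follows.
\begin{itemize}
\item $f_i^-\tau=\sigma\circ r_i$ and $f_{i+1}^+\tau=\sigma$.
\item $f_i^+\tau$ and $f_{i+1}^-\tau$ are constant in one coordinate, hence degenerate and zero in $C_*$.
\item Each face in a direction $j\ne i,i+1$ has image inside an $(n-1)$-face of $Q$. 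It is therefore a noninjective $n$-cube, of degree $\le n-1$, and vanishes in $C^{[n]}_*$.
\end{itemize}
Since $\tau$ has degree $n$, it lies in $C^{[n]}_{n+1}$, and
\[
\partial^{[n]}\tau=(-1)^i\bigl(\sigma\circ r_i+\sigma\bigr).
\]
Thus $\sigma\circ r_i\sim-\sigma$.

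\textbf{Transpositions.} This is the main obstacle. A single homotopy in a new direction cannot carry $\sigma$ directly to $\sigma\circ s_i$, because corresponding vertices such as $(0,1)$ and $(1,0)$ are at distance $2$. I would therefore look for a graph map $h\colon I^3\to I^2$ acting on coordinates $x_i,x_{i+1}$ and one extra coordinate, with the following six faces:
\begin{itemize}
\item one face equal to the identity;
\item one face equal to the swap, possibly composed with reflections, which are already handled;
\item the remaining four faces either degenerate or non-surjective onto $I^2$, so that they vanish in $C^{[n]}$.
\end{itemize}
I would search among lattice-polynomial maps built from $\max$, $\min$ and negation, e.g.\ $h(a,b,c)=\bigl(\max(a,\min(b,c)),\max(b,\min(a,1-c))\bigr)$-type formulas, checking the graph-map condition and the six faces.

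If no single $h$ works, the fallback is to chain two or three such $(n+1)$-cubes. Each step contributes a boundary relation, and the intermediate folded $n$-cubes, which have degree $<n$, drop out modulo $C^{n-1}$. Combined with the reflection relation, this should yield $\sigma\circ s_i\sim\pm\sigma$. Finally, each $\tau$ built from $\sigma$ has degree exactly $n$, because it has a face equal to (a reparametrization of) $\sigma$. This confirms that all these relations are legitimate boundaries in $C^{[n]}_*$, which completes the argument.
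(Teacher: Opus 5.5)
Your reduction to generators of $\mathrm{Aut}(I^n)$ is correct, and so is your reflection step. In fact your $\tau$, with $\sigma$ replaced by $\sigma\circ r_i$, is exactly the $(n+1)$-cube the paper uses for the flip relation (Lemma~\ref{lem_flip}). The genuine gap is the transposition case, which is where the real content lies. You describe a search for a map $h\colon I^3\to I^2$ and a vague fallback of chaining several cubes, but you never produce either. The sample formula you give also does not have the required faces: at $c=0$ it gives $(a,a\vee b)$ and at $c=1$ it gives $(a\vee b,b)$, neither of which is the identity or a swap. As written, nothing establishes $\sigma\circ s_i\sim\pm\sigma$, so the theorem is not proved.

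The missing idea is already latent in your phrase ``the swap, possibly composed with reflections''. Use the quarter-turn $R=s_i\circ r_{i+1}$, which in the coordinates $(x_i,x_{i+1})=(a,b)$ is $(a,b)\mapsto(1-b,a)$. Unlike the swap, $R$ moves every vertex of $I^n$ to an adjacent vertex. So $h(a,b,0)=(a,b)$, $h(a,b,1)=(1-b,a)$ is a graph map $I^3\to I^2$. Equivalently, $\sigma$ and $\sigma\circ R$ are the front and back faces of a single $(n+1)$-cube $\rho$; this is Lemma~\ref{lem_rotboundary}.

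The remaining faces of $\rho$ are all noninjective:
\begin{itemize}
\item each of the four remaining faces of $h$ has only three vertices in its image;
\item faces of $\rho$ in directions other than $i$, $i+1$ and the new direction land in an $(n-1)$-face of $Q$.
\end{itemize}
Hence $\partial^{[n]}\rho=\pm(\sigma-\sigma\circ R)$, giving $\sigma\sim\sigma\circ R$. Then $\sigma\circ s_i=(\sigma\circ R)\circ r_{i+1}\sim-\sigma\circ R\sim-\sigma$, by your reflection relation applied to the injective cube $\sigma\circ R$; this is Lemma~\ref{lem_swapboundary}. With this step supplied, your argument coincides with the paper's.
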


We will prove Theorem~\ref{thm:homologous} through a sequence of lemmas.  These lemmas were originally proven in \cite{PhD_thesis_Samira} in the context of graphs which are digital images; we include the proofs here for completeness.

Note that $\sigma$ and $\gamma$ are related by a finite sequence of compositions of cube automorphisms
$
T_i, \ T_{i,j} \colon  I^n \to I^n, \quad i,j \in \{1,\ldots,n\}, \ i<j,
$
where $T_i$ reflects the $i$-th coordinate via $t_i \mapsto 1 - t_i$, and $T_{i,j}$ permutes the $i$-th and $j$-th coordinates.
To prove the desired result, we show that $\sigma$, $\sigma \circ T_i$, and $\sigma \circ T_{i,j}$ are homologous to each other, up to a sign, where homology consists solely of singular $(n+1)$-cubes of degree $n$. 

The simplest instance of the relation involving a coordinate reflection
occurs for singular $1$-cubes.
Consider a graph containing the edge $\{x,y\}$. Let $xy$ denote the
singular $1$-cube in the graph defined by $0\mapsto x$ and $1\mapsto y$, and let
$yx$ denote its reversal. Then
$xy$ is homologous to $-yx$, because
 $-xy-yx$ is the boundary of the singular $2$-cube $\rho$
defined by 
\[
\rho(0,0)=\rho(1,0)=\rho(1,1)=x,
\qquad
\rho(0,1)=y.
\]
To generalize this phenomenon to singular cubes of arbitrary dimension,
the key observation is that $\rho$ can be defined by setting
$f_1^-\rho=xy$ and prescribing $f_1^+\rho$ as the degenerate
singular $1$-cube constant at $x$. This construction leads to the
following result.
\begin{lemma}[\cite{PhD_thesis_Samira}]\label{lem_flip}
For a singular $n$-cube $\sigma$ in 
$C_n^{[n]}(G)$,
the chain $\sigma+\sigma\circ T_i$ lies in $\partial^{[n]}_{n+1}\!\bigl(C_{n+1}^{[n]}(G)\bigr).
$
\end{lemma}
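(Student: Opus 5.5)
The plan is to realize the $1$-cube example $\rho$ as a special case of a single precomposition construction. Given an injective singular $n$-cube $\sigma$ and an index $i$, I will build a singular $(n+1)$-cube $\rho = \sigma\circ\varphi$, where $\varphi\colon I^{n+1}\to I^n$ is a graph map that inserts a new coordinate $s$ in position $i$. It is defined by
$$ \varphi(t_1,\dots,t_{i-1},s,t_i,\dots,t_n) = (t_1,\dots,t_{i-1},\,t_i(1-s),\,t_{i+1},\dots,t_n). $$
This $\varphi$ is a graph map: the assignment $(s,t_i)\mapsto t_i(1-s)$ sends each edge of $I^2$ to an edge or a vertex of $I^1$, and the other coordinates are passed through unchanged. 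For $n=1$ and $i=1$ this recovers exactly the cube $\rho$ written down before the lemma.

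Next I compute the faces of $\rho$ in the two special directions $i$ (the $s$-coordinate) and $i+1$ (the old $t_i$).
\begin{itemize}
\item $f_i^-\rho$ (set $s=0$) is $\sigma$.
\item $f_i^+\rho$ (set $s=1$) is $\sigma$ with $t_i$ frozen at $0$, viewed as an $n$-cube. It is degenerate in direction $i$, so it vanishes in $C_n$.
\item $f_{i+1}^-\rho$ (set $t_i=0$) is constant in $s$, hence degenerate.
\item $f_{i+1}^+\rho$ (set $t_i=1$) is $u\mapsto \sigma(\dots,1-u_i,\dots)$, which is exactly $\sigma\circ T_i$.
\end{itemize}
For every other direction $j\neq i,i+1$, the face $f_j^\pm\rho$ factors through the corresponding $(n-1)$-face of $\sigma$. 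Its image therefore has at most $2^{n-1}$ vertices, so it is not injective and has degree $<n$; it vanishes in $C^{[n]}_n$.

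Putting these together, the only surviving terms come from directions $i$ and $i+1$, and
$$ \partial^{[n]}_{n+1}\rho \equiv (-1)^i\sigma - (-1)^{i+1}\,\sigma\circ T_i = (-1)^i(\sigma+\sigma\circ T_i). $$
It then suffices to check that $\rho$ is a legitimate element of $C^{[n]}_{n+1}(G)$.
\begin{itemize}
\item Its degree is $n$: the facet $f_i^-\rho=\sigma$ is injective, and no $(n+1)$-cube can be injective here since $\rho$ factors through $I^n$.
\item It is nondegenerate: $\sigma$ is injective because it is nonzero in $C^{[n]}_n$, so its vertices are distinct. Then $f_i^-\rho\neq f_i^+\rho$, and in each direction $j\ne i$ the two opposite faces contain the distinct points $\sigma(\dots,0,\dots)$ and $\sigma(\dots,1,\dots)$.
\end{itemize}
Hence $\pm\rho$ has boundary $\sigma+\sigma\circ T_i$ in $C^{[n]}_*$, which is the claim.

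The main obstacle I expect is bookkeeping rather than ideas. One point is getting the face relation and index shift right, so that $f_{i+1}^+\rho$ is literally $\sigma\circ T_i$ rather than a coordinate permutation of it. The other is verifying carefully that each remaining face is either degenerate or of degree $<n$; the degree-drop argument via the image size of $(n-1)$-faces is the step I would double-check first.
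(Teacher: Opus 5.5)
Your proposal is correct and is essentially the paper's proof: your $\rho=\sigma\circ\varphi$ is exactly the paper's cube $\rho$. It has the same four surviving/degenerate faces in directions $i$ and $i+1$ and yields the same identity $\partial^{[n]}_{n+1}\rho=(-1)^i(\sigma+\sigma\circ T_i)$. The differences are cosmetic: you bound the image of the remaining faces by $2^{n-1}$, where the paper uses the cruder $2^{n-1}+2^{n-2}$, and you explicitly check that $\rho$ has degree $n$ and is nondegenerate, which the paper leaves implicit.
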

\begin{proof}
We will show that $\sigma + \sigma\circ T_i= (-1)^i \partial_{n+1}^{[n]} \rho$ in $C_n^{[n]}(G)$, where
 $\rho \in C_{n+1}^{[n]}(G)$ is the singular $(n+1)$-cube given by:
\[ \rho(t_1,\dots,t_{n+1}) = \begin{cases}
\sigma(t_1,\dots,t_{i-1},t_{i+1},t_{i+2},\dots,t_{n+1}) &\text{ if } t_i=0 \\
\sigma(t_1,\dots,t_{i-1},0,t_{i+2},\dots, t_{n+1}) &\text{ if } t_i=1.
\end{cases} \]
We must compute the various faces of $\rho$.

First we compute $f_i^-\rho$ and $f^+_i\rho$:
\begin{align*}
f_i^-\rho(t_1,\dots,t_n) &= \rho(t_1,\dots,t_{i-1},0,t_{i},\dots,t_n) = \sigma(t_1,\dots,t_{i-1},t_{i},\dots,t_{n}) \\
f^+_i\rho(t_1,\dots,t_n) &= \rho(t_1,\dots,t_{i-1},1,t_{i},\dots,t_n) = \sigma(t_1,\dots,t_{i-1},0,\dots,t_{n}) 
\end{align*}
Here we see that $f_i^-\rho = \sigma$, and $f^+_i\rho$ is degenerate since it does not depend on coordinate $t_i$.

Next we compute $f^-_{i+1}\rho$ and $f^+_{i+1}\rho$:
\begin{align*}
f^-_{i+1}\rho(t_1,\dots,t_n) &= \rho(t_1,\dots,t_{i},0,t_{i+1},\dots,t_n) = \sigma(t_1,\dots,t_{i-1},0,t_{i+1},\dots,t_{n}) \\
f^+_{i+1}\rho(t_1,\dots,t_n) &= \rho(t_1,\dots,t_{i},1,t_{i+1},\dots,t_n) \\
&=\begin{cases}
\sigma(t_1,\dots,t_{i-1},1,t_{i+1},t_{i+2},\dots,t_{n}) &\text{ if } t_i=0 \\
\sigma(t_1,\dots,t_{i-1},0,t_{i+1},\dots, t_{n}) &\text{ if } t_i=1
\end{cases} \\
&= \sigma \circ T_i(t_1,\dots,t_n)
\end{align*}
Thus $f^-_{i+1}\rho$ is degenerate, and $f^+_{i+1}\rho = \sigma\circ T_i$.

Now we compute $f^-_j\rho$ and $f^+_j\rho$ for $j\not\in \{i,i+1\}$. For simplicity in the notation below, we will assume that $j<i$:
\begin{align*}
f^-_j\rho(t_1,\dots,t_n) &= \rho(t_1,\dots,0,t_j,\dots,t_n) \\
&= \begin{cases}
\sigma(t_1,\dots,0,t_j,\dots,t_{i-1},t_{i+1},t_{i+2},\dots,t_{n}) &\text{ if } t_i=0 \\
\sigma(t_1,\dots,0,t_j,\dots,t_{i-1},0,t_{i+2},\dots, t_{n}) &\text{ if } t_i=1
\end{cases}
\end{align*}
From the two cases above, we see that $f^-_j\rho$ can take on at most $2^{n-1}+2^{n-2} < 2^n$ values, so $f^-_j\rho$ is not injective, and thus $f^-_j\rho = 0 \in C_n^{[n]}(G)$. A similar computation shows that $f^+_j\rho = 0 \in C_n^{[n]}(G)$.

Summing the faces of $\rho$ in $C_n^{[n]}(G)$ gives only the two nonzero terms $f_i^-\rho=\sigma$ and $f^+_{i+1}\rho=\sigma\circ T_i$, and we have
\[ \partial_{n+1}^{[n]} \rho = \sum_j (-1)^j(f^-_j\rho - f^+_j\rho) = (-1)^i\sigma - (-1)^{i+1}\sigma\circ T_i = (-1)^i(\sigma + \sigma\circ T_i) \]
as desired.
\end{proof}

In order to show that $\sigma$ is homologous to
$-\sigma\circ T_{i,j}$, we first show that $\sigma$ is homologous to
$\sigma\circ R_{i,j}$, where
\[
R_{i,j}=T_{i,j}\circ T_j.
\]
The following example illustrates this relation in dimension $2$,
and the construction used in the example will be generalized in the
next lemma.

Consider a graph containing the $4$-cycle
$
(x,y,z,w,x),
$ 
and a singular $2$-cube $\sigma$ in the graph with
\[
\sigma(0,0)=x,\qquad
\sigma(1,0)=y,\qquad
\sigma(1,1)=z,\qquad
\sigma(0,1)=w.
\]
In order to show that \(\sigma\) is homologous to
\(\sigma\circ R_{1,2}=\sigma\circ T_{1,2}\circ T_2\),
we consider the singular \(3\)-cube \(\rho\) shown below, together with its boundary
\(\partial_3^{[2]}\rho\), where
\[
f_3^-\rho=\sigma
\qquad\text{and}\qquad
f_3^+\rho=\sigma\circ R_{1,2}.
\]
\begin{align}
\partial_3^{[2]}\left(
\raisebox{-0.5\height}{\includegraphics[scale=0.35]{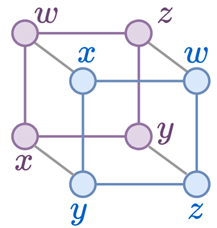}}
\right)
&=
\begin{alignedat}{2}
&-\raisebox{-0.4\height}{\includegraphics[scale=0.35]{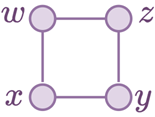}}
&&+
\raisebox{-0.4\height}{\includegraphics[scale=0.35]{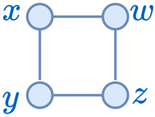}}
\\
&\quad\quad f_3^-\rho
&&\quad\quad f_3^+\rho .
\end{alignedat}
\label{eq_sing_3_cube_rotation}
\end{align}
All the other faces of $\rho$ are noninjective and therefore
have degree less than $2$, so they do not contribute to the boundary
in $C_*^{[2]}(G)$ and hence the displayed boundary shows that
$\sigma$ is homologous to $\sigma\circ R_{1,2}$ 
in \(H_2^{[2]}(G)\).

The construction of \(\rho\) can be generalized to arbitrary dimension, yielding the following result for a singular \(n\)-cube \(\sigma\).

\begin{lemma}
[\cite{PhD_thesis_Samira}]\label{lem_rotboundary}
For a singular $n$-cube $\sigma$ in 
$C_n^{[n]}(G)$,
the chain $\sigma-\sigma\circ R_{i,j}$ lies in $\partial^{[n]}_{n+1}\!\bigl(C_{n+1}^{[n]}(G)\bigr).
$
\end{lemma}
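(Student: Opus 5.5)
The plan is to generalize the $3$-cube $\rho$ of (\ref{eq_sing_3_cube_rotation}) by adding a new last coordinate $s=t_{n+1}$ and interpolating between $\sigma$ and its rotated version. First I will write the rotation explicitly. Since $T_j$ flips the $j$-th coordinate and $T_{i,j}$ then swaps slots $i$ and $j$, we get
$$(\sigma\circ R_{i,j})(t)=\sigma(t')$$
where $t'$ agrees with $t$ except that $t'_i=1-t_j$ and $t'_j=t_i$. Thus $R_{i,j}$ is a quarter-turn in the $(i,j)$-plane. It sends each corner $(a,b)$ of that plane to $(1-b,a)$, namely
$$(0,0)\mapsto(1,0)\mapsto(1,1)\mapsto(0,1)\mapsto(0,0).$$

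Define $\rho\colon I^{n+1}\to G$ by $\rho(t,0)=\sigma(t)$ and $\rho(t,1)=\sigma\circ R_{i,j}(t)$. The first step is to check that $\rho$ is a graph map. Edges inside $s=0$ or inside $s=1$ are fine because $\sigma$ and $\sigma\circ R_{i,j}$ are graph maps. An edge in the $s$-direction joins $\sigma(t)$ to $\sigma(R_{i,j}t)$. These two points differ only in the $(i,j)$-coordinates, which move from $(a,b)$ to $(1-b,a)$. By the list of corners above this changes exactly one coordinate, so the images are adjacent. Immediately, $f_{n+1}^-\rho=\sigma$ and $f_{n+1}^+\rho=\sigma\circ R_{i,j}$.

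Next I will show that every other facet of $\rho$ is noninjective, and hence vanishes in $C_n^{[n]}(G)$. If $\sigma$ is not injective there is nothing to prove, since then $\sigma$ and $\sigma\circ R_{i,j}$ are both zero in $C_n^{[n]}(G)$; so assume $\sigma$ injective. This also gives $\deg\rho=n$, so $\rho\in C^{[n]}_{n+1}(G)$.
\begin{itemize}
\item For $k\notin\{i,j,n+1\}$, the facet $f_k^{\pm}\rho$ fixes $t_k$. The rotation does not touch coordinate $k$, so the whole image lies in $\sigma$ of an $(n-1)$-face. That is at most $2^{n-1}<2^n$ vertices, so the facet is not injective.
\item For $f_i^{\pm}\rho$ (say $t_i=c$), fix the remaining coordinates and look at the square in the $(t_j,s)$-directions. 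Its four vertices map to $\sigma$ evaluated at the $(i,j)$-corners $(c,0)$, $(c,1)$, $(1,c)$, $(0,c)$. For $c=0$ the corner $(0,0)$ appears twice, and for $c=1$ the corner $(1,1)$ appears twice. So this facet is noninjective.
\item An identical check applies to $f_j^{\pm}\rho$: fixing $t_j=c$, the corners are $(0,c),(1,c),(1-c,0),(1-c,1)$, again with a repeat.
\end{itemize}

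Summing, in $C_n^{[n]}(G)$ we get
$$\partial^{[n]}_{n+1}\rho=(-1)^{n+1}\bigl(\sigma-\sigma\circ R_{i,j}\bigr),$$
which is the claim. The only delicate point is the corner bookkeeping for the $f_i$ and $f_j$ facets, together with getting the direction of the rotation consistent with $R_{i,j}=T_{i,j}\circ T_j$. I would verify this against the $n=2$ picture. If $\rho$ happened to be degenerate, then $\sigma=\sigma\circ R_{i,j}$ and the statement would be trivial.
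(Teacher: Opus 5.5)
Your proposal is correct and follows the paper's proof. You build the same $(n+1)$-cube $\rho$ with $f_{n+1}^-\rho=\sigma$ and $f_{n+1}^+\rho=\sigma\circ R_{i,j}$, show every other facet is noninjective, and obtain $\partial^{[n]}_{n+1}\rho=(-1)^{n+1}(\sigma-\sigma\circ R_{i,j})$. Your explicit corner bookkeeping, and your check that $\rho$ is a graph map (which the paper leaves implicit), make concrete the paper's observation that each such facet has a pair of opposite faces with intersecting images.
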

\begin{proof}
We show that $\sigma - \sigma \circ R_{i,j} = (-1)^{n+1}\,\partial_{n+1}^{[n]} \rho$ in $C_n^{[n]}(G)$, where $\rho \in C_{n+1}^{[n]}(G)$ is the singular $(n+1)$-cube whose front and back $(n+1)$-faces are $\sigma$ and $\sigma \circ R_{i,j}$, respectively. That is, 
$$\rho(t_1,\dots,t_{n+1}) =
\begin{cases}
\sigma(t_1,\dots,t_n), & \text{if } t_{n+1} = 0, \\[2pt]
\sigma \circ R_{i,j}(t_1,\dots,t_n), & \text{if } t_{n+1} = 1.
\end{cases}$$
Thus, it suffices to show that all remaining faces of $\rho$ are trivial in $C_n^{[n]}(G)$. 
Observe that for  $1 \le k \le n$, $f^-_k \rho$ has
$f^-_k \sigma$ and $f^-_k(\sigma \circ R_{i,j})$ as a pair of opposite $(n+1)$-faces.
The images of these two singular cubes have a nonempty intersection, because,
$f^-_k \sigma(I^{n-1})$ is a face of the elementary cube $\sigma(I^n)$, while
$f^-_k(\sigma \circ R_{i,j})(I^{n-1})$ is the same face of a quarter-turn rotation of that cube.
Consequently, $f^-_k \rho$ is not injective and is therefore trivial in $C_n^{[n]}(G)$. The argument for $f^+_k \rho$ is analogous.
\end{proof}
Next, we combine the above result with Lemma~\ref{lem_flip} to obtain
the desired relation between $\sigma$ and $\sigma\circ T_{i,j}$.
\begin{lemma}[\cite{PhD_thesis_Samira}]\label{lem_swapboundary}
For a singular $n$-cube $\sigma$ in 
$C_n^{[n]}(G)$,
the chain $\sigma+\sigma\circ T_{i,j}$ lies in $\partial^{[n]}_{n+1}\!\bigl(C_{n+1}^{[n]}(G)\bigr).
$
\end{lemma}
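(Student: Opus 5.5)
We deduce this from Lemma~\ref{lem_flip} and Lemma~\ref{lem_rotboundary}, using $R_{i,j}=T_{i,j}\circ T_j$. Write $B:=\partial^{[n]}_{n+1}\bigl(C^{[n]}_{n+1}(G)\bigr)$ and use $\equiv$ for congruence modulo $B$.

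First, apply Lemma~\ref{lem_rotboundary} to $\sigma$:
\[
\sigma \equiv \sigma\circ R_{i,j} = (\sigma\circ T_{i,j})\circ T_j .
\]
Next, apply Lemma~\ref{lem_flip} to the singular $n$-cube $\tau:=\sigma\circ T_{i,j}$ and the index $j$. It gives
\[
\tau + \tau\circ T_j \equiv 0, \qquad\text{that is,}\qquad (\sigma\circ T_{i,j})\circ T_j \equiv -\sigma\circ T_{i,j}.
\]
Combining the two congruences yields $\sigma \equiv -\sigma\circ T_{i,j}$. Hence $\sigma+\sigma\circ T_{i,j}\in B$, as claimed.

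Two points need care.

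\emph{Composition convention.} We must confirm that $\sigma\circ R_{i,j}$ means $(\sigma\circ T_{i,j})\circ T_j$, so that the reflection $T_j$ acts on the cube $\sigma\circ T_{i,j}$. If the convention were instead $(\sigma\circ T_j)\circ T_{i,j}$, we would apply Lemma~\ref{lem_flip} to $\sigma$ in place of $\tau$. That gives $\sigma\circ T_j\equiv-\sigma$. Precomposition with the fixed automorphism $T_{i,j}$ is a chain map on $C^{[n]}_*(G)$, since it preserves images and therefore degree. So it preserves $B$, and we get $\sigma\circ T_j\circ T_{i,j}\equiv -\sigma\circ T_{i,j}$. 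The conclusion is the same either way.

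\emph{When $\sigma$ is not injective.} In that case $\sigma=0$ in $C^{[n]}_n(G)$, and so is $\sigma\circ T_{i,j}$, so the statement holds trivially.

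The only real obstacle is this bookkeeping of composition order; the rest is a direct combination of the two earlier lemmas.
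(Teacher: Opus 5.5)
Your argument is correct and is the paper's proof. The paper writes $\sigma+\sigma\circ T_{i,j}=(\sigma-\sigma\circ R_{i,j})+(\sigma\circ R_{i,j}+\sigma\circ R_{i,j}\circ T_j)$, and your application of Lemma~\ref{lem_flip} to $\tau=\sigma\circ T_{i,j}$ with index $j$ produces exactly the same chain $\sigma\circ T_{i,j}+\sigma\circ R_{i,j}$ as the paper's application to $\sigma\circ R_{i,j}$.

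The convention worry is moot: $\sigma\circ(T_{i,j}\circ T_j)=(\sigma\circ T_{i,j})\circ T_j$ by associativity, consistent with the paper's identity $T_{i,j}=R_{i,j}\circ T_j$. You can drop that aside, and you should. Precomposition with $T_{i,j}$ is not literally a chain map, because faces get reindexed. In the other convention you would instead apply Lemma~\ref{lem_flip} to $\sigma\circ T_{i,j}$ with index $i$, since $T_j\circ T_{i,j}=T_{i,j}\circ T_i$.
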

\begin{proof}
Since $T_{i,j} = R_{i,j} \circ T_j$, we have: 
\[ \sigma + \sigma\circ T_{i,j} = (\sigma - \sigma\circ R_{i,j}) + (\sigma\circ R_{i,j} + (\sigma\circ R_{i,j}) \circ T_j), \]
and the above is a chain in $\partial^{[n]}_{n+1}\!\bigl(C_{n+1}^{[n]}(G)\bigr)$ by Lemmas  \ref{lem_flip} and \ref{lem_rotboundary}.
\end{proof}

These lemmas conclude the proof of Theorem~\ref{thm:homologous}.

\begin{defn}
Let $\sim$ denote the equivalence relation on $C^{[n]}_n(G)$ where for injective singular $n$-cubes $\sigma$ and $\gamma$ with
$\Img(\sigma)=\Img(\gamma)$, we declare
\[
\sigma \sim (-1)^{[\sigma,\gamma]}\,\gamma.
\]
Here, $[\sigma,\gamma]$ denotes the minimal number of factors of the form
$T_i$ and $T_{i,j}$ appearing in any composition that expresses $\sigma$ in
terms of $\gamma$.
\end{defn}

The importance of this equivalence relation is given by the following lemma.

\begin{lemma}\label{lem:celliso}
There is an isomorphism 
$$ C^{[n]}_n/\!\sim ~\cong C^\mathit{cell}_n(CW^\Box(G)) $$
\end{lemma}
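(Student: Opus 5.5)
Throughout we read $C^{[n]}_n/\!\sim$ as the quotient of $C^{[n]}_n(G)$ by the subgroup generated by the elements $\sigma - (-1)^{[\sigma,\gamma]}\gamma$, for injective $\sigma,\gamma$ with the same image. Since degree-$n$ singular $n$-cubes are exactly the injective ones, $C^{[n]}_n(G)$ is free abelian on the injective singular $n$-cubes. Each such $\sigma$ has image an $n$-cube $Q=\Img(\sigma)\subseteq G$, and hence corresponds to a cell $e_Q$ of $CW^\Box(G)$. We orient the cells once and for all: for each $n$-cube $Q$ fix a reference injective singular cube $\sigma_Q$ with image $Q$, and let $\sigma_Q$ determine the orientation of $e_Q$. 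Define
$$ \Phi\colon C^{[n]}_n(G)\to C^{cell}_n(CW^\Box(G)),\qquad \sigma\mapsto \epsilon(\sigma)\, e_{\Img\sigma}, $$
where $\epsilon(\sigma)=+1$ if $\sigma$ induces the same orientation on the cell as $\sigma_Q$, and $-1$ otherwise. Concretely, $\sigma = \sigma_Q\circ g$ for a unique $g\in\mathrm{Aut}(I^n)$, which is the hyperoctahedral group generated by the $T_i$ and $T_{i,j}$, and $\epsilon(\sigma)=\det(g)$ for $g$ viewed as an affine map of $\Box^n$.

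The first step is to show that $\epsilon$ is compatible with $\sim$, i.e.\ that $\det(g)=(-1)^{\ell(g)}$, where $\ell(g)$ is the minimal number of generators $T_i,T_{i,j}$ in a word expressing $g$. Each generator is a reflection of $\RR^n$ with determinant $-1$, so $\det$ is a homomorphism $\mathrm{Aut}(I^n)\to\{\pm1\}$ sending every generator to $-1$. Hence any word for $g$, minimal or not, has length of parity $\det(g)$. This shows that $[\sigma,\gamma]$ is well defined mod $2$ and equals $\epsilon(\sigma)\epsilon(\gamma)$. Consequently $\Phi(\sigma)=(-1)^{[\sigma,\gamma]}\Phi(\gamma)$, so $\Phi$ kills the relations and descends to $C^{[n]}_n/\!\sim$. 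This parity point is the one genuine check in the proof; everything else is bookkeeping.

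Next, $\Phi$ is surjective, since $\Phi(\sigma_Q)=e_Q$ for every $n$-cube $Q$. For injectivity we construct an inverse $\Psi(e_Q)=[\sigma_Q]$. The composite $\Phi\Psi$ is the identity by construction. For $\Psi\Phi$, any injective $\sigma$ with image $Q$ satisfies $\sigma\sim \epsilon(\sigma)\sigma_Q$ by the parity computation, so $\Psi\Phi[\sigma]=\epsilon(\sigma)[\sigma_Q]=[\sigma]$. Thus $C^{[n]}_n/\!\sim$ is free on the classes $[\sigma_Q]$ and $\Phi$ is an isomorphism.

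Two minor points remain. Distinct subgraphs isomorphic to $I^n$ give distinct cells, because cells are indexed by cube subgraphs rather than by vertex sets. Also, $\mathrm{Aut}(I^n)$ acts freely on the set of injective maps with a given image, which is what makes the $g$ in $\sigma=\sigma_Q\circ g$ unique.
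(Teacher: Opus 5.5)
Your proof is correct and follows the same route as the paper. You fix a reference injective cube $\sigma_Q$ for each $n$-cube subgraph, use it as the characteristic map that orients the corresponding cell, and identify $C^{[n]}_n/\!\sim$ with the free abelian group on the classes $[\sigma_Q]$. Your determinant argument, that $(-1)^{[\sigma,\gamma]}$ equals the determinant of the automorphism $\gamma^{-1}\circ\sigma$ of $I^n$, shows that the relations are consistent and never identify a generator with its negative; this makes explicit the well-definedness the paper takes for granted when it asserts $C^{[n]}_n/\!\sim\,\cong\ZZ\{[\sigma_i]\}_i$.
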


\begin{proof}
Let $\{D_i\}_i$ denote the set of subgraphs
$$ D_i \subseteq G $$
such that $D_i$ is isomorphic to $I^n$.  
Observe that we have 
$$ C^{[n]}_n(G) \cong \ZZ\{ \sigma: I^n \to G \: : \: \sigma \: \mr{injective.} \}. $$
Choosing a representative $\sigma_i$ for each equivalence class of $\sigma$ with
$$ \mr{Im}(\sigma_i) = D_i $$
we have 
$$ C_n^{[n]}/\!\sim ~\cong \ZZ\{[\sigma_i]\}_i. $$
By definition the $n$-cells of $CW^\Box(G)$ are in bijective correspondence with the subgraphs $D_i \subseteq G$.  Furthermore, the choice of representative $\sigma_i$ of $[\sigma_i]$ gives rise to a map
$$ \sigma_i: \Box^n \hookrightarrow CW^\Box(G) $$
which we will take to be the characteristic map of the cell associated to $D_i$.
By choosing a fixed orientation of $\Box^n$, the characteristic map $\sigma_i$ fixes a choice of orientation of the cell associated to $D_i$.  The result follows from the fact that the equivalence relation changes the sign of $[\sigma_i]$ when you change the orientation.
\end{proof}

We are now ready to state one of the main results of this section, which provides a
convenient description of $H_n^{[n]}(G)$ for $n$-quasimonophobic graphs.

\begin{theorem}\label{thm:cell}
For an $n$-quasimonophobic graph $G$, we have
\[
H_n^{[n]}(G)=C_n^{[n]}(G)/\!\sim ~\cong C_n^{\emph{cell}}(CW^\Box(G)).
\]
\end{theorem}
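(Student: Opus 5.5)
Since $C^{[n]}_{n-1}(G)=0$ (an $(n-1)$-cube cannot have degree $n$), every element of $C^{[n]}_n(G)$ is a cycle, and
\[
H_n^{[n]}(G)=C_n^{[n]}(G)/\partial^{[n]}_{n+1}\bigl(C^{[n]}_{n+1}(G)\bigr).
\]
The second isomorphism in the statement is Lemma~\ref{lem:celliso}, so the task is to show that the image of $\partial^{[n]}_{n+1}$ equals the subgroup $B$ generated by the relations $\sigma-(-1)^{[\sigma,\gamma]}\gamma$ for $\Img\sigma=\Img\gamma$. The inclusion $B\subseteq \Img\partial^{[n]}_{n+1}$ is exactly Theorem~\ref{thm:homologous}, more precisely Lemmas~\ref{lem_flip}, \ref{lem_rotboundary} and \ref{lem_swapboundary} composed along a word in the $T_i,T_{i,j}$. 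Hence the real content is the reverse inclusion: for every singular $(n+1)$-cube $\rho$ of degree $n$, we must show $\partial^{[n]}_{n+1}\rho\in B$.

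To do this, I would use the map $\Phi\colon C^{[n]}_n(G)\to C^{\mathit{cell}}_n(CW^\Box(G))$ from Lemma~\ref{lem:celliso}. It sends an injective $\sigma$ to $\pm e_Q$, where $Q=\Img\sigma$ and the sign records orientation relative to the chosen representative. Its kernel is precisely $B$, so it suffices to show $\Phi(\partial^{[n]}_{n+1}\rho)=0$. Fix $\rho$ with $\deg\rho=n$. Then $\rho$ is noninjective, since an injective $(n+1)$-cube has degree $n+1$. Group the injective faces of $\rho$ by their supporting $n$-cube $Q$. Because $G$ is $n$-quasimonophobic, $Q$ is rigid and quasimonophobic. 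So whenever some face of $\rho$ is supported by $Q$, at least two faces are.

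The key step is to strengthen ``at least two'' to ``exactly two, with opposite induced orientations''. This gives cancellation of the coefficient of $e_Q$, the same way the boundary of a cell in a cube complex cancels. I would argue as follows. Suppose $f^{a}_i\rho$ and $f^{b}_j\rho$ are both supported by $Q$.

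In the case $i=j$, they are opposite faces. Rigidity forces the edges of $\rho$ in direction $i$ to map into the induced subgraph $Q$. They cannot be edges of $Q$ joining the two copies, since each copy already uses all vertices of $Q$ bijectively. So each such edge is constant or joins two vertices of $Q$; in either case the comparison map $f^a_i\rho^{-1}\circ f^{1-a}_i\rho$ is a graph automorphism of $I^n$ moving each vertex by distance at most $1$. I would then check that such an automorphism is the identity. At that point the two faces coincide as maps and cancel with the signs $(-1)^i(f^-_i-f^+_i)$.

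In the case $i\neq j$, the faces share the $(n-1)$-face $f^{a}_if^{b}_j\rho$ (with the face relation (\ref{eq_face relation})). Using rigidity again, the remaining vertices are forced into $Q$. Then $\rho$ factors through a fold of $I^{n+1}$ onto $I^n$ along that common codimension-$2$ face, and a direct sign computation like the one in Lemma~\ref{lem_flip} shows the two images in $C^{\mathit{cell}}_n$ cancel. Finally I would rule out three or more faces supported by the same $Q$ by a pigeonhole argument on the fold, or by pairing them off.

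I expect this orientation and cancellation analysis to be the main obstacle. Quasimonophobicity as defined only guarantees a second face, not pairwise cancellation, so the rigidity of $Q$ has to do real work. If the direct analysis gets unwieldy, I would try an induction on the number of faces of $\rho$ supported by $Q$: replace $\rho$ by $\rho$ precomposed with cube automorphisms (which change $\partial^{[n]}\rho$ only by elements of $B$, by the lemmas above) to normalize the position of the faces, and then compute.
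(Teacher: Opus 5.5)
Your global reduction matches the paper's. You use $H_n^{[n]}(G)=C_n^{[n]}(G)/\Img\partial^{[n]}_{n+1}$, the relations defining $\sim$ are boundaries by Lemmas~\ref{lem_flip}, \ref{lem_rotboundary} and \ref{lem_swapboundary}, and everything reduces to showing $\partial^{[n]}_{n+1}\rho\equiv 0$ modulo $\sim$ for each nondegenerate degree-$n$ cube $\rho$. However, the mechanism you propose for that step is false.

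\textbf{Opposite faces.} An automorphism of $I^n$ moving every vertex by distance at most $1$ need not be the identity. By Lemma~\ref{lem_flipORrot} it can be a flip $T_k$ or a quarter-turn $T_{k,l}\circ T_l$. In fact it is never the identity here, since $f_i^-\rho=f_i^+\rho$ means $\rho$ is degenerate and already $0$ in $C_{n+1}(G)$. So the case you treat is vacuous, and the two real cases are missed.
\begin{itemize}
\item Rotations do cancel, but because they preserve orientation ($f_i^-\rho\sim f_i^+\rho$), not because the faces coincide.
\item Flips break your plan outright. Take $n=1$, $Q=\{x,y\}$, and $\rho(0,0)=\rho(1,1)=x$, $\rho(0,1)=\rho(1,0)=y$. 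This $\rho$ is nondegenerate of degree $1$, and all four of its faces are supported by $Q$. The opposite pair $f_1^-\rho=xy$, $f_1^+\rho=yx=xy\circ T_1$ contributes $-2e_Q$ under $\Phi$. It is cancelled only by the adjacent pair $f_2^\pm\rho$.
\end{itemize}
Hence ``exactly two faces on $Q$ with opposite orientations'' is false, and three or more faces on $Q$ cannot be ruled out. The $3$-cube in \eqref{eq:boundary_of_symm_3} shows the same phenomenon for $n=2$.

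\textbf{Adjacent faces.} Your claim here also fails: rigidity does not force all vertices of $\rho$ into $Q$. On the path $x-y-w$, take $\rho(0,0)=x$, $\rho(0,1)=\rho(1,0)=y$, $\rho(1,1)=w$. Then $f_1^-\rho$ and $f_2^-\rho$ are supported by $\{x,y\}$, but $f_1^+\rho$ and $f_2^+\rho$ are supported by $\{y,w\}$. So $\rho$ does not fold onto $Q$, and the cancellation must pair faces lying on a different cube.

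\textbf{What is missing.} You need the structure proved in Lemmas~\ref{lem_exactly_two possibilities}, \ref{lem_injective_face_relations} and \ref{lem_mostly_noninjective_faces}.
\begin{itemize}
\item Two adjacent faces on $Q$ force the codimension-$2$ identification $f_i^{-a}f_j^b\rho=f_i^af_j^{-b}\rho$. This is a reflection symmetry $\rho=\rho\circ T_{i,j}$ (or $\rho\circ T_i=\rho\circ T_i\circ T_{i,j}$).
\item That symmetry makes every $f_k^\pm\rho$ with $k\notin\{i,j\}$ noninjective. It also matches $f_i^\pm\rho$ with $f_j^\pm\rho$ (resp.\ $f_j^\mp\rho$) via explicit composites of $T$'s whose sign exactly cancels the boundary signs.
\item A flip-related opposite pair reduces to this adjacent case through \eqref{eq_codim22}.
\item A rotation-related opposite pair forces all other faces to be noninjective.
\end{itemize}
Your fallback of ``pairing them off'' is the right idea, but without this case analysis grouping faces by $Q$ does not show that the coefficients cancel.
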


The proof of Theorem~\ref{thm:cell} is somewhat involved.  We pause to explain the strategy of the argument.  For any graph $G$, $C_{n-1}^{[n]}(G)$ is trivial, therefore, the $n$-th homology group $H_n^{[n]}(G)$ of the quotient complex $C^{[n]}(G)$, is given by
    \[
    H_n^{[n]}(G)
\;=\;
\frac{C_n^{[n]}(G)}{\partial^{[n]}_{n+1}\!\left(C_{n+1}^{[n]}(G)\right)}
\;\cong\;
\frac{C_n^{[n]}
(G)
\big/\sim}{\partial_{n+1}^{[n]}\!\left(C_{n+1}^{[n]}(G)\right)\big/\sim}.
    \]
    To obtain the desired result, we will use monophobicity arguments, following
\cite{monophobic}, to show that for an $n$-quasimonophobic graph $G$, the group
\[
\partial_{n+1}^{[n]}\!\left(C_{n+1}^{[n]}(G)\right)\big/\sim
\]
is trivial.

The following examples illustrate the vanishing of
$\operatorname{Im}(\partial_{n+1}^{[n]})$ modulo the equivalence relation
$\sim$ for $n$-quasimonophobic graphs in the cases $n=1$ and $n=2$. 
\begin{eg}
Consider the $1$-quasimonophobic graph consisting of the two edges
$\{x,y\}$ and $\{x,w\}$. The boundary of a singular $2$-cube in the
graph is shown below:
\begin{align}
\partial_2^{[1]}\left(
\raisebox{-0.5\height}{\includegraphics[scale=0.35]{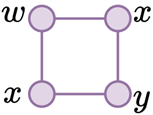}}
\right)
&=-xw+yx+xy-wx,
\label{eq:boundary_of_symm}
\end{align}
where a singular $1$-cube with $0\mapsto x$ and $1\mapsto y$ is
represented by $xy$, and similarly for the other terms. The singular
$1$-cubes $xw$ and $wx$ are related to each other by $T_1$, so
$xw\sim -wx$; similarly, $yx\sim -xy$. Thus the right-hand side of
\eqref{eq:boundary_of_symm} is trivial modulo $\sim$.
\end{eg}
\begin{eg}
Consider the following singular $3$-cube in the $4$-cycle
$
(x,y,z,w,x),
$
together with its boundary $\partial_3^{[2]}$. 

\begin{align}
\partial_3^{[2]}\left(
\raisebox{-0.5\height}{\includegraphics[scale=0.35]{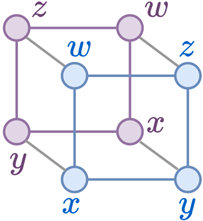}}
\right)&
=
\begin{alignedat}{4}
&-\raisebox{-0.4\height}{\includegraphics[scale=0.35]{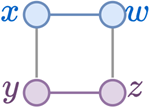}}
&&+\raisebox{-0.4\height}{\includegraphics[scale=0.35]{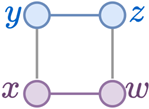}}
&&+\raisebox{-0.4\height}{\includegraphics[scale=0.35]{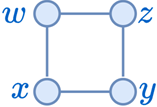}}
&&-\raisebox{-0.4\height}{\includegraphics[scale=0.35]{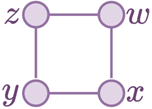}}\\
&\qquad\quad f_1^-
&&\qquad\quad  f_1^+
&&\qquad\quad  f_3^-
&&~~\qquad  f_3^+
\end{alignedat}
\label{eq:boundary_of_symm_3}
\end{align}
The $f_2^\pm$-faces (top and bottom faces) are both noninjective and therefore do not
contribute to the boundary above. Furthermore, the $f_1^-$- and $f_3^+$-faces (the right and the back faces) are related to each other by $T_{1,2}$, and hence are equivalent under $\sim$ up to a negative sign. The same is true for the left ($f_1^+$-) and front ($f_3^-$-) faces. 
Therefore, the
right-hand side is trivial modulo the equivalence relation $\sim$.
 \end{eg}

The preceding examples illustrate, in dimensions $2$ and $3$, the
general pattern that we now prove for arbitrary $n$. Let $\sigma$ be a
singular $(n+1)$-cube in $G$ of degree $n$. We analyze the boundary
$\partial_{n+1}^{[n]}(\sigma)$ in the quotient complex $C_*^{[n]}(G)$.

Since $\sigma$ has degree $n$, it is noninjective and has at least one
injective $n$-face, equivalently, at least one face supported by a
rigid $n$-cube in $G$. By $n$-quasimonophobicity, whenever a face of
$\sigma$ is supported by a rigid $n$-cube, at least one other face of
$\sigma$ is supported by the same rigid $n$-cube. The next lemmas
generalize the configurations illustrated in the preceding examples:
In Lemma~\ref{lem_mostly_noninjective_faces}, we show that all but at most four faces of $\sigma$ are
noninjective and hence vanish in $C_n^{[n]}(G)$
; In Lemma~\ref{lem_injective_face_relations}, we determine the
possible relations among the remaining injective faces and show that
they occur in pairs related by compositions of operators of the form
$T_i$ and $T_{i,j}$. These relations imply that
the remaining terms in
$\partial_{n+1}^{[n]}(\sigma)$ cancel modulo the equivalence relation
$\sim$. Thus
\[
\partial_{n+1}^{[n]}(\sigma)\equiv 0\pmod{\sim},
\]
and hence
\[
\operatorname{Im}\bigl(\partial_{n+1}^{[n]}\bigr)/\mathord{\sim}=0.
\]
This proves the theorem.

Before proving Lemmas~\ref{lem_injective_face_relations}~and~\ref{lem_mostly_noninjective_faces}, we establish two preliminary results.  The following result appears in \cite[Lemma 5.2]{jamil_digital}, and a similar result is also proved in \cite{monophobic}.

\begin{lemma}\label{lem_flipORrot}
Let $\sigma$ and $\gamma$ be injective singular $n$-cubes with images equal to the same rigid $n$-cube. Suppose that for all $t\in I^n$, $\sigma(t)$ is either equal or adjacent to $\gamma(t)$. Then $\sigma$ and $\gamma$ are either equal to each other, or related by a flip or a rotation operator. 
\end{lemma}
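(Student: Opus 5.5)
The idea is to reduce the statement to a combinatorial fact about automorphisms of $I^n$ that move every vertex by Hamming distance at most $1$. Throughout, a flip means an operator $T_i$ and a rotation means an operator $R_{i,j}$ (or its inverse).

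First I will pass to an automorphism of the cube. Since $\sigma$ is injective with $\Img(\sigma)=Q$ and $Q\cong I^n$, the map $\sigma$ is a bijection on vertices; it sends the $n2^{n-1}$ edges of $I^n$ injectively into the $n2^{n-1}$ edges of $Q$, so it is a graph isomorphism $I^n\to Q$. The same holds for $\gamma$, so $\varphi:=\sigma^{-1}\circ\gamma$ is an automorphism of $I^n$ and $\gamma=\sigma\circ\varphi$.

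Next I will translate the hypothesis into a condition on $\varphi$. The vertices $\sigma(t)$ and $\gamma(t)=\sigma(\varphi(t))$ both lie in $Q$, and they are equal or adjacent in $G$. Because $Q$ is rigid, i.e. induced, they are equal or adjacent in $Q$. Since $\sigma$ is an isomorphism onto $Q$, this means $t$ and $\varphi(t)$ are equal or adjacent in $I^n$. So $\varphi$ moves every vertex by Hamming distance at most $1$.

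Every automorphism of $I^n$ has the form $\varphi(t)=\pi(t)+a$ (coordinates mod $2$), where $\pi$ permutes coordinates and $a\in\{0,1\}^n$. I now classify such maps.

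1. Evaluating at $t=0$ gives $\varphi(0)=a$, so $|a|\le 1$.
2. If $\pi=\mathrm{id}$, then $\varphi$ is the identity (when $a=0$) or $T_i$ (when $a=e_i$). Hence $\gamma=\sigma$ or $\gamma=\sigma\circ T_i$.
3. If $\pi\neq\mathrm{id}$, choose a coordinate moved by $\pi$ and consider test vertices $t=e_k$ and $t=e_k+e_l$.
   - If $a=0$, pick $i$ with $\pi(i)=j\ne i$. Then $\pi(e_i)=e_j$ is at distance $2$ from $e_i$, which is impossible.
   - If $\pi$ has a cycle $(i\,j\,k\,\dots)$ of length $\ge 3$, then $t=e_i$ forces $a\in\{e_i,e_j\}$, and $t=e_j$ forces $a\in\{e_j,e_k\}$, so $a=e_j$. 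If the cycle has length $3$, then $t=e_k$ forces $a\in\{e_k,e_i\}$, a contradiction. For longer cycles, the same argument at the next vertex of the cycle gives a contradiction.
   - If $\pi$ contains two disjoint transpositions $(i\,j)$ and $(k\,l)$, then $t=e_i+e_k$ is moved to distance $4$ by $\pi$. Adding $a$ changes at most one coordinate, so the distance stays $\ge 3$, a contradiction.
   - The remaining case is that $\pi$ is a single transposition $(i\,j)$ and $a\in\{e_i,e_j\}$. I will check directly on the four patterns of $(t_i,t_j)$ that these two maps do satisfy the distance condition, and identify them as $T_{i,j}\circ T_j=R_{i,j}$ and $T_{i,j}\circ T_i$, which is the inverse quarter turn. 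Hence $\gamma=\sigma\circ R$ for a rotation $R$.

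I expect the main obstacle to be careful bookkeeping rather than any single hard estimate. One point is the order of composition in the paper's conventions, to confirm exactly which operator equals $R_{i,j}$ and whether "rotation" should also include $R_{i,j}^{-1}$; if needed, $R_{i,j}^{-1}=R_{j,i}$ up to the paper's convention. The other point is making the cycle-length argument uniform for cycles of length $\ge 4$.
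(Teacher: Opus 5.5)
Your proof is correct and complete. The paper does not prove this lemma itself; it imports it from \cite[Lemma~5.2]{jamil_digital}, noting a similar result in \cite{monophobic}. So your argument is a self-contained replacement rather than a variant of something in the text.

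The reduction is the right one:
\begin{itemize}
\item Injectivity together with $\Img(\sigma)=Q\cong I^n$ makes $\sigma$ an isomorphism onto $Q$.
\item Rigidity is used exactly once, to pull adjacency in $G$ back to adjacency in $Q$. It is genuinely needed: a non-induced $4$-cycle inside $K_4$ admits the antipodal map $T_1\circ T_2$ satisfying the hypothesis.
\item What remains is to classify the maps $\varphi(t)=\pi(t)+a$ in the hyperoctahedral group with displacement at most $1$. This description of $\mathrm{Aut}(I^n)$ is also implicit in the paper's remark that any two parametrizations of $Q$ differ by compositions of $T_i$ and $T_{i,j}$.
\end{itemize}

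Both of your bookkeeping worries resolve cleanly.

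\emph{Cycles of length $\ge 3$.} The argument is uniform in the cycle length. If $\pi(i)=j$ and $\pi(j)=k$, you have already forced $a=e_j$. Testing $e_k$ forces $a\in\{e_k,e_{\pi(k)}\}$, and $e_j$ is in neither set: $j\neq k$, and $\pi(k)\neq\pi(i)=j$ because $k\neq i$. So length $3$ and longer cycles need no separate treatment.

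\emph{Which map is the rotation.} The map $R_{i,j}=T_{i,j}\circ T_j$ sends $(t_i,t_j)\mapsto(1-t_j,t_i)$, so it equals $\pi(t)+e_i$. The other survivor is $\pi(t)+e_j=T_{i,j}\circ T_i=R_{i,j}^{-1}$. Since $\gamma=\sigma\circ R_{i,j}^{-1}$ is the same as $\sigma=\gamma\circ R_{i,j}$, you never need to add inverses to the notion of ``rotation.'' This also agrees with the paper's later usage, where the rotation is written $T_iT_{i,j}=R_{i,j}$.
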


\begin{lemma}\label{lem_exactly_two possibilities}
    Let $\sigma$ be a noninjective nondegenerate singular $(n+1)$-cube with a face supported by an $n$-cube $Q$ in an $n$-quasimonophobic graph $G$. Then exactly one of the following holds for $\sigma$:
\begin{itemize}
    \item~\!\emph{[Case-1]} A pair of adjacent faces $f_i^a\sigma$ and $f_j^b\sigma$, with $i<j$ and $a,b\in\{-,+\}$, are supported by $Q$ such that
    $
        f_i^{-a}f_j^b\sigma = f_i^af_j^{-b}\sigma .
    $
    \item~\!\emph{[Case-2]}  A pair of opposite faces of $\sigma$ are supported by $Q$, with one face being a rotation $T_iT_{i,j}$ of the other.
\end{itemize}
\end{lemma}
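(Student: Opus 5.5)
Let $f_i^a\sigma$ be a face of $\sigma$ supported by $Q$. Since $\sigma$ is noninjective, $n$-quasimonophobicity gives a second face $f_j^b\sigma\neq f_i^a\sigma$ also supported by $Q$. The proof then splits according to whether $j\neq i$ (adjacent faces) or $j=i$, $b=-a$ (opposite faces). The argument has three parts: analyze each case, show the two descriptions exclude each other, and check that no third configuration arises.

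\emph{Adjacent faces.} Reorder so that $i<j$. The faces $f_i^a\sigma$ and $f_j^b\sigma$ meet along the common $(n-1)$-face $f_i^af_j^b\sigma$, using the face relation (\ref{eq_face relation}). Both are injective onto the same rigid cube $Q$, so I would compare the remaining pieces of each one. The face $f_i^a\sigma$ consists of $f_i^af_j^b\sigma$ together with its opposite face $f_i^af_j^{-b}\sigma$, and $f_j^b\sigma$ is built similarly. Edges of $\sigma$ in direction $i$ connect $f_i^af_j^{-b}\sigma$ pointwise to $f_i^{-a}f_j^{-b}\sigma$. The points of $f_j^b\sigma$ not on the shared face are $f_i^{-a}f_j^b\sigma$, and these must fill out the complement of $f_i^af_j^b\sigma$ in $Q$. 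That complement is exactly the image of $f_i^af_j^{-b}\sigma$, the facet of $Q$ opposite the shared facet, because $Q$ is rigid and an injective cube onto $Q$ sends opposite facets to opposite facets. Now, for each $t$, the vertex $f_i^{-a}f_j^b\sigma(t)$ is adjacent or equal to $f_i^af_j^b\sigma(t)$, and it lies in the opposite facet. In a rigid cube, the only vertex of the opposite facet adjacent to a given vertex is its matching antipodal-in-one-direction vertex. So $f_i^{-a}f_j^b\sigma=f_i^af_j^{-b}\sigma$, and this is Case-1.

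\emph{Opposite faces.} Here $f_i^-\sigma$ and $f_i^+\sigma$ are both supported by $Q$. By the edges in direction $i$, $\sigma(t,0)$ is equal or adjacent to $\sigma(t,1)$ for every $t$, so Lemma~\ref{lem_flipORrot} applies. The two faces are equal, differ by a flip $T_k$, or differ by a rotation. If they are equal, then $\sigma$ is degenerate in direction $i$, which is excluded. If they differ by a flip $T_k$, then the faces $f_k^{\pm}$ of $\sigma$ are degenerate in direction $i$, so they are noninjective. In that situation I would instead locate an adjacent pair supported by $Q$: the faces $f_k^\pm\sigma$ fold, and the face $f_i^-\sigma$ pairs with a face of the type above. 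The goal is to derive the Case-1 relation, or else a contradiction. The remaining possibility is a rotation, and writing it as $T_kT_{k,l}$ gives Case-2.

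\emph{Exclusivity.} In Case-1, the relation $f_i^{-a}f_j^b\sigma=f_i^af_j^{-b}\sigma$ should force the faces opposite to $f_i^a\sigma$ and $f_j^b\sigma$ to be noninjective. If Case-2 also held, some opposite pair would be supported by $Q$ and related by a rotation, and I would aim to contradict the folding from Case-1. I expect the main obstacle to be the flip subcase together with this exclusivity check. The difficulty is the bookkeeping of which coordinates fold. My first attempt would be to reduce everything to the $2$-dimensional face in directions $i,k$ (or $i,j$), where each configuration becomes an explicit small picture like the examples above, and then propagate that picture across the remaining coordinates using rigidity of $Q$.
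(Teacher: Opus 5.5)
Your adjacent-face argument is correct. It is the paper's argument, with the unique-neighbour step in the rigid cube made explicit. The ``equal'' and ``rotation'' subcases of the opposite-face case are also fine.

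\textbf{The flip subcase.} This is the gap: you leave it as a goal, and the heuristic you give is false. Suppose $f_i^{-a}\sigma=f_i^a\sigma\circ T_k$. The faces of $\sigma$ in the direction corresponding to $k$ are $f_{k+1}^{\pm}\sigma$ if $i\le k$, and $f_k^{\pm}\sigma$ if $i>k$. They are neither folded nor noninjective; they are injective and supported by $Q$. The $3$-cube in \eqref{eq:boundary_of_symm_3} already shows this: its $f_1^\pm$-faces differ by $T_2$, while its $f_3^\pm$-faces are injective and supported by the same square.

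The missing step is simply to unwind the flip relation coordinatewise, which gives
\[
f_i^{-a}f_{k+1}^b\sigma=f_i^af_{k+1}^{-b}\sigma \quad (i\le k), \qquad f_k^bf_i^{-a}\sigma=f_k^{-b}f_i^a\sigma \quad (i>k).
\]
Take $i\le k$ and combine this with the face relation $f_i^af_{k+1}^b\sigma=f_k^bf_i^a\sigma$. Together they say that the two $i$-facets of $f_{k+1}^b\sigma$ are the two opposite $k$-facets $f_k^{\pm b}f_i^a\sigma$ of the injective cube $f_i^a\sigma$. Hence $f_{k+1}^b\sigma$ hits all $2^n$ vertices of $Q$, so it is injective. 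Since $Q$ is induced, it is then supported by $Q$. It is adjacent to $f_i^a\sigma$, and the displayed identity is exactly the Case-1 relation with $j=k+1$.

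For $i>k$ the same reasoning applies to $f_k^b\sigma$, with $(k,i)$ and $(b,a)$ playing the roles of $(i,j)$ and $(a,b)$. So a flip always lands in Case-1, and there is no contradiction to look for.

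\textbf{Exclusivity.} Your sketch here also rests on a false premise. Case-1 does not force the faces opposite to $f_i^a\sigma$ and $f_j^b\sigma$ to be noninjective. In the same $3$-cube, $f_1^-$ and $f_3^+$ form a Case-1 pair, yet $f_1^+$ and $f_3^-$ are also supported by the square. The paper's proof only establishes that at least one case occurs.

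If you want the ``exactly one'' part, argue from Case-2 instead. Suppose $f_i^{\pm}\sigma$ are supported by $Q$ and related by a rotation. Then every other face of $\sigma$ has a pair of opposite facets whose images meet, as in Lemma~\ref{lem_rotboundary}, so it is noninjective. Therefore no adjacent pair of faces can be supported by $Q$.
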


\begin{proof}
Since $G$ is $n$-quasimonophobic and $\sigma$ is noninjective with a face supported by $Q$, there exists at least one additional face of $\sigma$ supported by $Q$. Any two such faces are either adjacent or opposite to each other.  We first show that if a pair of adjacent faces are supported by $Q$, then Case-1 holds. We then consider the case in which a pair of opposite faces is supported by $Q$ and show that this yields either Case-1 or Case-2.

\smallskip

\noindent
\textbf{Adjacent case.}
Suppose that a pair of adjacent faces $f_i^a\sigma$ and $f_j^b\sigma$, with $i<j$, are supported by $Q$. 
Since both $f_i^a\sigma$ and $f_j^b\sigma$ are supported by the rigid cube $Q$, the image of their common face
$$ f_i^{a}f_j^b\sigma
=
f_{j-1}^{b}f_i^a\sigma $$
is a face of $Q$.

Because $Q$ is rigid, an $(n-1)$-dimensional face of $Q$ determines its opposite $(n-1)$-dimensional face uniquely. Therefore, the faces of $f_i^a\sigma$ and $f_j^b\sigma$ opposite to their common $(n-1)$-dimensional face must coincide, \textit{i.e.}, using the face relation \eqref{eq_face relation},
\[
f_i^{-a}f_j^b\sigma
=
f_{j-1}^{-b}f_i^{a}\sigma
=
f_i^af_j^{-b}\sigma.
\]
See, for example, the singular $2$- and $3$-cubes whose boundary
calculations are shown in \eqref{eq:boundary_of_symm} and
\eqref{eq:boundary_of_symm_3}, respectively, where there are pairs of adjacent
faces mapping onto the same $1$- or $2$-cube, respectively.
\smallskip

\noindent
\textbf{Opposite case.}
Now suppose that a pair of opposite faces $f_i^a\sigma$ and $f_i^{-a}\sigma$ are supported by $Q$. These faces are injective singular $n$-cubes satisfying the hypothesis of Lemma~\ref{lem_flipORrot}. Since $\sigma$ is nondegenerate, $f_i^a\sigma$ and $f_i^{-a}\sigma$ are either related by a rotation or by a flip.

If they are related by a rotation, then Case-2 holds  (for example, see 
the singular $3$-cube in \eqref{eq_sing_3_cube_rotation}). 

Suppose that the faces $f_i^a\sigma$ and $f_i^{-a}\sigma$ are related by a flip $T_k$ where $k\in\{1,\ldots,n\}$ is an index, i.e.

\[
f_i^{-a}\sigma = f_i^a\sigma \circ T_{k}.
\]
The singular $3$-cube in \eqref{eq:boundary_of_symm_3} provides an
example of this case, where the $f_1^\pm$-faces are related by $T_2$ as well as $f_3^\pm$-faces are related by $T_1$. 

A direct computation shows that

\begin{equation}\label{eq_codim22}
\begin{array}{rclr}
  f_i^{-a} f_{k+1}^b \sigma & = & f_i^a f_{k+1}^{-b} \sigma & \text{if } i \le k, \\
  f_k^{b} f_i^{-a} \sigma   & = & f_k^{-b} f_i^{a} \sigma   & \text{if } i > k.
\end{array}    
\end{equation}

For example, assuming $i\leq k$: for any $t=(t_1,\ldots,t_{i-1},t_{i+1},\ldots,t_{k},t_{k+2}\ldots,t_{n+1})\in I^{n-1}$, setting $\delta_a=0$ if $a=-$ and $\delta_a=1$ if $a=+$, we get

\begin{align*}
&f_i^{-a} f_{k+1}^b\sigma(t) \\
&= \sigma(t_1,\ldots,t_{i-1},1-\delta_a,t_{i+1},\ldots,t_{k},\delta_b,t_{k+2},\ldots,t_{n+1}) \\
&= f_i^{-a}\sigma(t_1,\ldots,t_{i-1},t_{i+1},\ldots,t_{k},\delta_b,t_{k+2},\ldots,t_{n+1}) \\
&= f_i^a\sigma\!\circ T_k(t_1,\ldots,t_{i-1},t_{i+1},\ldots,t_{k},\delta_b,t_{k+2},\ldots,t_{n+1}) \\
&= f_i^a\sigma(t_1,\ldots,t_{i-1},t_{i+1},\ldots,t_{k},1-\delta_b,t_{k+2},\ldots,t_{n+1}) \\
&=\sigma(t_1,\ldots,t_{i-1},\delta_a,t_{i+1},\ldots,t_{k},1-\delta_b,t_{k+2},\ldots,t_{n+1}) \\
&= f_i^af_{k+1}^{-b}\sigma(t).
\end{align*}  

When $i\leq k$, $f_i^a\sigma$ and $f_{k+1}^b\sigma$ have a common face and \eqref{eq_codim22} implies that
$$ f_i^{-a}f^b_{k+1}\sigma = f_i^af_{k+1}^{-b}\sigma, $$
showing that the faces of $f_i^a\sigma$ and $f_{k+1}^b\sigma$ opposite to their common face are equal. This means that $f_i^a\sigma$ and $f_{k+1}^b\sigma$ are both supported by the same cube $Q$. Thus, Case-1 holds in this situation, with $j=k+1$.

 Similarly, if $i>k$,
 $f_i^a\sigma$ and $f_{k}^b\sigma$ are both supported by the same cube $Q$. Thus, Case-1 holds in this situation, with the role of  $(i,j)$ and $(a,b)$ played by $(k,i)$ and $(b,a)$, respectively.
\end{proof}

The preceding lemma shows that in case-2, faces supported by the same
quasimonophobic $n$-cube are related by a composition of the form $T_i T_{i,j}$.
A different relation holds in case-1, as described in the next lemma;
in that case, the faces are related by a composition of $j-i-1$ factors or $j-i$ factors. The lower-dimensional examples in \eqref{eq:boundary_of_symm} and
\eqref{eq:boundary_of_symm_3}
illustrate simple instances of Case-1(b) of the following lemma, where the relevant
faces are related by a single factor of the form $T_i$ or $T_{i,j}$. For example, if $\tau$ denotes the singular $3$-cube in \eqref{eq:boundary_of_symm_3}, then $f_1^\pm\tau=f_3^\mp\tau \circ T_{1,2}$.

\begin{lemma}\label{lem_injective_face_relations}
If $f_i^a\sigma$ and $f_j^b\sigma$ are a pair of adjacent faces of $\sigma$ with
$i<j$, both supported by the same rigid $n$-cube.
Then the following hold:
\begin{itemize}
    \item~\!\emph{[Case-1(a)]} If $a=b$, then
    \begin{align*}
f_i^a \sigma &= f_j^a \sigma\circ T_{i,i+1}\circ T_{i+1,i+2}\circ\cdots\circ T_{j-2,j-1}.
\end{align*}

    \item~\!\emph{[Case-1(b)]} If $a= -b$, then
    \begin{align*}
f_i^a \sigma &= f_j^{-a} \sigma\circ T_i\circ T_{i,i+1}\circ T_{i+1,i+2}\circ\cdots\circ T_{j-2,j-1}.
\end{align*}
\end{itemize}
\end{lemma}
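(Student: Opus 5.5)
The plan is a direct coordinate computation, using only the face relation \eqref{eq_face relation} and the Case-1 identity from Lemma~\ref{lem_exactly_two possibilities}:
\[
f_i^{-a}f_j^b\sigma=f_i^af_j^{-b}\sigma .
\]
First I will write both faces as functions on $I^n$. The face $f_i^a\sigma$ sends $(s_1,\dots,s_n)$ to $\sigma$ with $\delta_a$ inserted in slot $i$. The face $f_j^b\sigma$ sends $(s_1,\dots,s_n)$ to $\sigma$ with $\delta_b$ inserted in slot $j$. Any point of $I^{n+1}$ with both $t_i$ and $t_j$ specified lies on the common codimension-two face or on one of the two ``opposite'' codimension-two faces. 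The Case-1 identity says that $\sigma$ takes the same values on the two opposite ones, namely $(t_i,t_j)=(1-\delta_a,\delta_b)$ and $(t_i,t_j)=(\delta_a,1-\delta_b)$.

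The key observation is that $\sigma$ restricted to the union of the slabs $t_i=\delta_a$ and $t_j=\delta_b$ is thereby determined by its values on $t_i=\delta_a$. The idea is to build an explicit bijection $\phi\colon I^n\to I^n$ with $f_j^b\sigma\circ\phi = f_i^a\sigma$, where $\phi$ acts on coordinates as follows:
\begin{itemize}
\item it leaves the coordinates outside positions $i,\dots,j-1$ of $I^n$ unchanged;
\item it moves the coordinate in position $i$ of the $f_i^a$-face, which corresponds to $t_{i+1}$ in $I^{n+1}$, along with the others to account for the shift.
\end{itemize}
Concretely, for $s=(s_1,\dots,s_n)$, $f_i^a\sigma(s)=\sigma(s_1,\dots,s_{i-1},\delta_a,s_i,\dots,s_n)$. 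Its $t_j$-coordinate is $s_{j-1}$.
\begin{itemize}
\item If $s_{j-1}=\delta_b$, the point lies on the common face, and we can read it as $f_j^b\sigma$ evaluated at the point with slot $i$ equal to $\delta_a$ and the slots $i+1,\dots,j-1$ holding $s_i,\dots,s_{j-2}$.
\item If $s_{j-1}=1-\delta_b$, the Case-1 identity replaces the point by the one with $t_i=1-\delta_a$ and $t_j=\delta_b$. This is $f_j^b\sigma$ evaluated with slot $i$ equal to $1-\delta_a$.
\end{itemize}
Thus in all cases $f_i^a\sigma(s)=f_j^b\sigma(\phi(s))$. The map $\phi$ places in slot $i$ the value $\delta_a$ if $s_{j-1}=\delta_b$, and $1-\delta_a$ otherwise, and it shifts $s_i,\dots,s_{j-2}$ up one slot.

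Next I will identify $\phi$ as a composite of flips and transpositions. The shift part, which sends slot $j-1$ to slot $i$ and moves the slots in between up by one, is exactly the cyclic permutation $T_{i,i+1}\circ\cdots\circ T_{j-2,j-1}$ (with the appropriate composition order checked on one coordinate).
\begin{itemize}
\item When $a=b$, slot $i$ receives $s_{j-1}$ itself, since $s_{j-1}=\delta_a\iff$ slot $i$ is $\delta_a$. This gives Case-1(a): $f_i^a\sigma=f_j^a\sigma\circ T_{i,i+1}\circ\cdots\circ T_{j-2,j-1}$.
\item When $a=-b$, slot $i$ receives $1-s_{j-1}$, which accounts for the extra reflection $T_i$. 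This gives Case-1(b), where the face on the right is $f_j^b\sigma=f_j^{-a}\sigma$.
\end{itemize}

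The main obstacle is bookkeeping. I need to make sure the composition order of the $T$'s, and whether $T_i$ sits before or after the cycle, matches the stated formula. I will settle this by tracking where a single coordinate goes under each side. I will also check the degenerate case $j=i+1$, where the cycle is empty, against the examples \eqref{eq:boundary_of_symm} and \eqref{eq:boundary_of_symm_3}. Injectivity and rigidity are not needed beyond providing the Case-1 identity.
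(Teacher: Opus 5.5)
Your proposal is correct and is essentially the paper's argument. Both are a direct coordinate computation whose only input is the Case-1 identity $f_i^{-a}f_j^b\sigma=f_i^af_j^{-b}\sigma$ from Lemma~\ref{lem_exactly_two possibilities}; the paper just packages that identity first as the symmetry $\sigma=\sigma\circ T_{i,j}$ (resp.\ $\sigma\circ T_i=\sigma\circ T_i\circ T_{i,j}$), where you split on the value of $s_{j-1}$. Your bookkeeping also checks out: $T_{i,i+1}\circ\cdots\circ T_{j-2,j-1}$ moves $s_{j-1}$ into slot $i$ and shifts $s_i,\dots,s_{j-2}$ up one slot, and $T_i$ is applied after this cycle, which is exactly your $\phi$.
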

\begin{proof}
When $a=b$, we first claim that $\sigma = \sigma \circ T_{i,j}$.
Indeed, the maps $\sigma$ and $\sigma \circ T_{i,j}$ agree on any point
$s=(s_1,\ldots,s_{n+1}) \in I^{n+1}$ with $s_i=s_j$.
They also agree when $s_i = 1 - s_j$, since 
$
f_i^{-a} f_j^{a} \sigma = f_i^{a} f_j^{-a} \sigma
$
by Lemma~\ref{lem_exactly_two possibilities}.

%
%
%
%
Similarly, for the case $a=-b$,
we can show that $\sigma\circ T_i = \sigma\circ T_i \circ T_{i,j}$, using the relation
$
f_i^{a} f_j^{a} \sigma = f_i^{-a} f_j^{-a} \sigma
$
of Lemma~\ref{lem_exactly_two possibilities}.

The proofs of the relations between faces are analogous in the cases $a=b$ and $a=- b$.
We therefore prove only the relation between $f_i^a\sigma$ and $f_j^{-a}\sigma$ in the case $a=-b$, omitting the proof for the other case.

Let $t=(t_1,\ldots,t_{i-1},t_{i+1}\ldots,t_{n+1}) \in I^{n}$ and set
$\delta_a=0$ if $a=-$
and $\delta_a=1$ if $a=+$.
\begin{align*}
    f_i^a\sigma(t)
&=\sigma(t_1,\ldots,t_{i-1},\delta_a,t_{i+1},\ldots,t_{n+1})\\
&=\sigma\circ T_i(t_1,\ldots,t_{i-1},1-\delta_a,t_{i+1},\ldots,t_{n+1})\\
&=\sigma\circ T_i\circ T_{i,j}(t_1,\ldots,t_{i-1},1-\delta_a,t_{i+1},\ldots,t_{n+1})\\
&=\sigma(t_1,\ldots,t_{i-1},1-t_j,t_{i+1},\ldots,t_{j-1},1-\delta_a,t_{j+1},\ldots,t_{n+1})\\
&=f_j^{-a}\sigma\circ T_i(t_1,\ldots,t_{i-1},t_j,t_{i+1},\ldots,t_{j-1},t_{j+1},\ldots,t_{n+1})\\
&=f_j^{-a}\sigma\circ T_i\circ T_{i,i+1}\circ T_{i+1,i+2}\circ\cdots\circ T_{j-2,j-1}(t).
\end{align*}
\begin{align*}
\end{align*}
\end{proof}
The following lemma shows that all the other faces are noninjective.

\begin{lemma}\label{lem_mostly_noninjective_faces}
    A nondegenerate degree-$n$ singular $(n+1)$-cube in an $n$-quasimonophobic graph has exactly $2$ or $4$ injective faces.
\end{lemma}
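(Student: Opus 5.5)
We start from an injective face. Since $\sigma$ has degree $n$, it has at least one injective $n$-face, say one supported by the rigid $n$-cube $Q$. By Lemma~\ref{lem_exactly_two possibilities}, $\sigma$ falls into exactly one of Case-1 or Case-2. In each case we show that every face outside a small set is noninjective. The idea is to locate, inside each such face, two distinct points of $I^{n+1}$ that $\sigma$ sends to the same vertex.

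\textbf{Case-2.} Suppose the opposite faces $f_i^a\sigma$ and $f_i^{-a}\sigma$ are both supported by $Q$, with $f_i^{-a}\sigma = f_i^a\sigma\circ T_kT_{k,l}$ a rotation. Take any face $f_m^c\sigma$ with $m\neq i$. It has a pair of opposite faces $f^c_{m'}f_i^{a}\sigma$ and $f^c_{m'}f_i^{-a}\sigma$, where $m'$ is the index of $m$ after deleting $i$.

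We repeat the argument of Lemma~\ref{lem_rotboundary}. The image of the first is a facet of $Q$, and the image of the second is the corresponding facet of a quarter-turn of $Q$. These two facets of $Q$ intersect: two facets of an $n$-cube meet unless they are opposite, and a rotation never sends a facet to its opposite. The facets coincide when $m'\notin\{k,l\}$, and are adjacent otherwise.

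Hence $f_m^c\sigma$ is noninjective. The only possibly injective faces are therefore $f_i^{\pm}\sigma$, and both are injective, so there are exactly $2$.

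\textbf{Case-1.} Suppose the adjacent faces $f_i^a\sigma$ and $f_j^b\sigma$, with $i<j$, are supported by $Q$ and satisfy $f_i^{-a}f_j^b\sigma = f_i^af_j^{-b}\sigma$. Write $\delta_a\in\{0,1\}$ as before.

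First we show that every face $f_m^c\sigma$ with $m\notin\{i,j\}$ is noninjective. The relation says that for every choice of the remaining coordinates,
\[
\sigma(\ldots,s_i=1-\delta_a,\ldots,s_j=\delta_b,\ldots)=\sigma(\ldots,s_i=\delta_a,\ldots,s_j=1-\delta_b,\ldots).
\]
Fix the $m$-th coordinate to be $c$ and choose the other coordinates freely. This produces two distinct points of the face $\{s_m=c\}$ with the same image, so $f_m^c\sigma$ is not injective.

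It remains to count among $f_i^{\pm}\sigma$ and $f_j^{\pm}\sigma$. We know $f_i^a\sigma$ and $f_j^b\sigma$ are injective. We then use the symmetry obtained in the proof of Lemma~\ref{lem_injective_face_relations}:
\begin{itemize}
\item if $a=b$, then $\sigma=\sigma\circ T_{i,j}$;
\item if $a=-b$, then $\sigma=\sigma\circ S$ with $S=T_iT_{i,j}T_i$.
\end{itemize}
In either case, the automorphism ($T_{i,j}$ or $S$) carries the facet $\{s_j=1-\delta_b\}$ of $I^{n+1}$ bijectively onto the facet $\{s_i=1-\delta_a\}$. For $a=-b$ this uses $1-\delta_b=\delta_a$. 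Consequently $f_j^{-b}\sigma$ equals $f_i^{-a}\sigma$ precomposed with a cube automorphism, a product of $T$'s as in Lemma~\ref{lem_injective_face_relations}. So $f_i^{-a}\sigma$ and $f_j^{-b}\sigma$ are either both injective or both noninjective.

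The number of injective faces is then $2$ or $4$. Nondegeneracy of $\sigma$ is used only through Lemma~\ref{lem_exactly_two possibilities}.

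\textbf{Main obstacle.} The delicate point is the index bookkeeping in the Case-1 symmetry. One must check that the automorphism sends the facet $\{s_j=1-\delta_b\}$ onto the facet $\{s_i=1-\delta_a\}$, rather than onto $\{s_i=\delta_a\}$. I would verify this directly on coordinates for both $a=b$ and $a=-b$.

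The Case-2 intersection claim also needs care. I would handle it by noting that $T_kT_{k,l}$ fixes every facet direction other than $k$ and $l$, and sends the direction-$k$ and direction-$l$ facets to facets adjacent to themselves.
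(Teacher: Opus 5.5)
Your proposal is correct and follows the paper's own proof. You split via Lemma~\ref{lem_exactly_two possibilities}; in Case-2 you kill every face $f_m^c\sigma$ with $m\neq i$ by the opposite-facet intersection argument of Lemma~\ref{lem_rotboundary}; in Case-1 you kill the faces with $m\notin\{i,j\}$ using $f_i^{-a}f_j^b\sigma=f_i^af_j^{-b}\sigma$, and you pair the remaining faces through the symmetry from the proof of Lemma~\ref{lem_injective_face_relations}. Using $\sigma=\sigma\circ T_{i,j}$ (resp.\ $\sigma=\sigma\circ T_iT_{i,j}T_i$) to match $f_i^{-a}\sigma$ with $f_j^{-b}\sigma$ also makes precise a step the paper leaves implicit, since that lemma as stated only relates the pair supported by $Q$.
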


\begin{proof}
Let $\sigma$ be a nondegenerate degree-$n$ singular $(n+1)$-cube in an $n$-quasimonophobic graph. Note that $\sigma$ is noninjective, and has a face supported by an $n$-cube $Q$. Then by Lemma \ref{lem_exactly_two possibilities}, exactly one of the two cases holds.

\textbf{Case-1 of Lemma \ref{lem_exactly_two possibilities}}: We claim that the faces $f_k^c\sigma$, for all indices $k\not\in\{i,j\}$ and $c\in\{-,+\}$ are noninjective singular $n$-cubes, as illustrated by the $f_2^\pm$-faces of the singular 3-cube in \eqref{eq:boundary_of_symm_3}.

Note that since we have $f_i^a f_j^{-b}\sigma = f_i^{-a} f_j^{b}\sigma$, two codimension-2 faces of $f_k^c\sigma$ are equal, i.e.
\begin{align*}
  f_i^a f_j^{-b}(f_k^c\sigma) &= f_i^{-a} f_j^{b}(f_k^c\sigma)  &\text{ if }i<j<k,\\
  f_i^a f_{j-1}^{-b}(f_k^c\sigma) &= f_i^{-a} f_{j-1}^{b}(f_k^c\sigma)  &\text{ if }i<k<j,\\
  f_{i-1}^a f_{j-1}^{-b}(f_k^c\sigma) &= f_{i-1}^{-a} f_{j-1}^{b}(f_k^c\sigma)  &\text{ if }k<i<j.
\end{align*}
Note that the shift in indices in the second and third equations above comes from the face relations.  For example, for $i<k<j$, using face relations, we get 
\begin{align*}
    f_i^a f_{j-1}^{-b}(f_k^c\sigma) 
    &=f_i^a f_k^c f_{j}^{-b}\sigma 
     \\&=f_{k-1}^c(f_i^a  f_{j}^{-b}\sigma) 
    \\&=f_{k-1}^c(f_i^{-a}  f_{j}^{b}\sigma)
    \\&=f_i^{-a} f_{j-1}^{b}(f_k^c\sigma).  
\end{align*}

Thus if $i<j<k$, the image of $f_k^c\sigma$ agrees on points $s,t\in I^{n-1}$ such that $s_\ell=t_\ell$ for all $\ell\notin\{i,j\}$, and $(s_i,s_j)=(\delta_a,1-\delta_b)$ while $(t_i,t_j)=(1-\delta_a,\delta_b)$, and similarly, for other cases on the ordering of $i,j,k$.

The four other faces $f_k^a \sigma$, where $k \in \{i,j\}$ and $a \in \{+,-\}$, are either all injective, or else any noninjective face forces the face related to it by a bijection (by Lemma~\ref{lem_injective_face_relations}) to be noninjective as well.

\textbf{Case-2 of Lemma \ref{lem_exactly_two possibilities}.}
In this case, the two faces supported by $Q$ are rotations of each other.
We show that all the remaining faces of $\sigma$ are noninjective.

This is similar to the argument for the singular cube $\rho$ in
Lemma~\ref{lem_rotboundary}, also illustrated by the singular $3$-cube in
\eqref{eq_sing_3_cube_rotation}. There, the two $f_{n+1}^-$- and
$f_{n+1}^+$-faces are rotations of each other, while every other face is
noninjective. Indeed, each of these other faces has a pair of opposite
faces whose images have nonempty intersection, and hence is
noninjective. The same argument applies here, showing that all faces of
$\sigma$, except for the two faces related by rotation and supported by
$Q$, are noninjective. 
\end{proof}

\begin{proof}[\textbf{Completion of the proof of Theorem~\ref{thm:cell}}]
Now we show that the boundary $\partial_{n+1}^{[n]}(\sigma)$ is trivial modulo the
equivalence relation $\sim$ in all the three cases $1(a)$, $1(b)$, and $2$ of
Lemmas~\ref{lem_exactly_two possibilities} and~\ref{lem_injective_face_relations}.

\paragraph{Case $1(a)$.}
In this case, at least one or both of the two pairs of faces
$(f_i^-\sigma, f_j^-\sigma)$ and $(f_i^{+}\sigma, f_j^{+}\sigma)$ consists of
injective faces related to each other by the relation stated in Lemma \ref{lem_injective_face_relations}. In the boundary computation of
$\partial_{n+1}^{[n]}(\sigma)$, all the terms corresponding to injective
faces  $(f_i^a\sigma, f_j^a\sigma)$, for $a\in\{-,+\}$, cancel each other modulo the equivalence relation $\sim$: Set
$\delta_a = 0$ if $a=-$ and $\delta_a = 1$ if $a=+$. Then the term 
\begin{align*}
(-1)^{i+\delta_a} f_i^a\sigma
&=
(-1)^{i+\delta_a} f_j^a \sigma \circ T_{i,i+1}\circ T_{i+1,i+2}\circ\cdots\circ T_{j-2,j-1} \\
&\sim
(-1)^{i+\delta_a}(-1)^{j-i-1} f_j^a \sigma \\
&=
(-1)^{j+\delta_a-1} f_j^a \sigma,
\end{align*}
cancels with the term $(-1)^{j+\delta_a} f_j^a\sigma$.

The argument for
Case $1(b)$ is entirely similar.

\paragraph{Case $2$.}
In this case, the boundary  $\partial_{n+1}^{[n]}(\sigma)$ consists of the injective faces are $f_i^a\sigma$ and
$f_i^{-a}\sigma$, and since each is a rotation of the other, we have $f_i^a\sigma \sim f_i^{-a}\sigma$.
Consequently, the corresponding terms cancel modulo $\sim$, and the boundary is
trivial.
\end{proof}

Using Theorem~\ref{thm:cell},  we deduce the following.

\begin{theorem}\label{thm:iso}
Suppose that $G$ is $n$-quasimonophobic.  Then the map
$$ H_n(CW^\Box(G)) \to H_n^{\mathit{inj}}(G) $$
is an injection.  If in addition $G$ is $(n-1)$-quasimonophobic, then this map is an isomorphism.
\end{theorem}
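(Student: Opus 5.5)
We compare two chain complexes, working degreewise. The first is the cellular chain complex $C^{cell}_\bullet(CW^\Box(G))$. The second is the $E^1$-edge complex $(H^{[\bullet]}_\bullet(G),d_1)$. The surjection (\ref{eq:cell}) is the composite
$$C^{cell}_k(CW^\Box(G)) \cong C^{[k]}_k(G)/\!\sim \;\twoheadrightarrow\; C^{[k]}_k(G)/\Img\partial^{[k]}_{k+1} = H^{[k]}_k(G).$$
It is well defined because, by Theorem~\ref{thm:homologous}, $\sim$-related cubes are homologous with the same sign, and it is surjective since $C^{[k]}_{k-1}=0$.

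The first step is to check that this is a chain map. The differential $d_1$ sends an injective $k$-cube $\sigma$ to the class of $\partial\sigma$ in $H^{[k-1]}_{k-1}$, which is the sum of its faces, all injective. The cellular boundary of the cell $\sigma(\Box^k)$ is the signed sum of its facet cells oriented by the faces of $\sigma$, because the characteristic maps are the $\sigma_i$ and $\sim$ tracks orientation (Lemma~\ref{lem:celliso}). So the two differentials agree on generators, and we obtain the map $H_n(CW^\Box(G))\to H^{inj}_n(G)$.

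For injectivity, note that Theorem~\ref{thm:cell} says the map is an isomorphism in degree $n$ when $G$ is $n$-quasimonophobic. Degree $n+1$ is merely surjective. We run a diagram chase. Let $z$ be a cellular $n$-cycle whose image is a $d_1$-boundary, say $d_1(y)$ with $y\in H^{[n+1]}_{n+1}$. Lift $y$ to a cellular chain $\tilde y$ using surjectivity in degree $n+1$. Then $z$ and $\partial^{cell}\tilde y$ have the same image in $H^{[n]}_n$. By injectivity in degree $n$ they are equal, so $[z]=0$.

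For surjectivity, assume in addition that $G$ is $(n-1)$-quasimonophobic, so degree $n-1$ is also an isomorphism. Take a $d_1$-cycle $x\in H^{[n]}_n$. Since degree $n$ is an isomorphism, $x$ is the image of a unique cellular chain $c$. The image of $\partial^{cell}c$ is $d_1x=0$, and since degree $n-1$ is injective, $\partial^{cell}c=0$. Hence $c$ is a cycle mapping to $x$, and the map is surjective. This is the standard fact that a chain map which is an isomorphism in degrees $n,n-1$ and surjective in degree $n+1$ induces an isomorphism on $H_n$. When $n=0$ the degree $n-1$ condition is vacuous.

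The main obstacle is the chain-map compatibility, specifically the signs. The paper's boundary $\sum(-1)^i(f_i^-\sigma-f_i^+\sigma)$ must match the cellular incidence numbers under the chosen orientations of the cells. If the signs do not match automatically, the fix is to use the sign conventions of Lemma~\ref{lem:celliso}. Under those conventions, the incidence number is the sign determined by comparing the face $f_i^\pm\sigma$ with the chosen representative $\sigma_j$ of the facet, via $\sim$. This matches the standard cubical cellular boundary formula for $\Box^k$. If necessary, we modify the identification of Lemma~\ref{lem:celliso} by a degree-dependent global sign, which does not affect homology.
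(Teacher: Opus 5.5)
Your proposal is correct and is essentially the paper's argument: the paper packages your two diagram chases as the long exact sequence of $0\to K_\bullet\to C^{\mathrm{cell}}_\bullet(CW^\Box(G))\to H^{[\bullet]}_\bullet(G)\to 0$, with $K_\bullet$ the kernel, and then uses $K_n=0$ (and $K_{n-1}=0$) from Theorem~\ref{thm:cell}. Your explicit check that the comparison map is a chain map with compatible signs is a step the paper leaves implicit.
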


\begin{proof}
Recall by Lemma~\ref{lem:celliso} that 
for any graph $G$, the cellular chains of $CW^\Box(G)$ in dimension $n$
is naturally identified with $C_n^{[n]}(G)/\!\sim$:
\[
C^{\emph{cell}}_n\bigl(CW^\Box(G)\bigr)
=\frac{C_n^{[n]}(G)}{\sim}.
\]
Moreover, it follows from Theorem~\ref{thm:homologous}, Lemma~\ref{lem_flip}, and Lemma~\ref{lem_swapboundary} that there is a surjection
$$ C_i^{[i]}(G)/\!\sim \twoheadrightarrow H^{[i]}_{i}(G). $$
Defining $K_i$ to be the kernel of this map, there is a short exact sequence of chain complexes given by
\begin{equation}\label{eq:SES}
 0 \to K_i \to C^\mathrm{cell}_i(CW^\Box(G)) \to H^{[i]}_i(G) \to 0.
 \end{equation}
The long exact sequence associated to the short exact sequence (\ref{eq:SES}) gives an exact sequence:
$$ H_n(K_\bullet) \to H_n(CW^\Box(G)) \rightarrow H_n^{\mathit{inj}}(G) \to H_{n-1}(K_\bullet). $$
If $G$ is $i$-quasimonophobic, Theorem~\ref{thm:cell} implies that 
$$ K_i = 0. $$
The result follows.
\end{proof}

\begin{theorem}\label{thm:main}
Let $G$ be a graph that is $(\le 2)$-quasimonophobic.
Then
\[
H_2(G)=H_2\bigl(CW^\Box(G)\bigr).
\]
\end{theorem}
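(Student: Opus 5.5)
The plan is to combine Corollary~\ref{cor_2_homology_no_3_cycles} with Theorem~\ref{thm:iso}. First, $(\le 2)$-quasimonophobic includes $1$-quasimonophobic, which by the remarks after Definition~\ref{defn_quasimonophobic} means $G$ is triangle-free. Corollary~\ref{cor_2_homology_no_3_cycles} then gives
\[
H_2(G)\cong H_2^{\mathit{inj}}(G).
\]

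Behind that corollary is the following spectral sequence argument, which I would recall to make sure the isomorphism is induced by the edge maps (\ref{eq:edge}). The groups $E^1_{0,2}=H_2^{[0]}(G)$ and $E^1_{1,1}=H_2^{[1]}(G)$ vanish: the first for trivial reasons, the second by Theorem~\ref{thm_H_2_1=0}. Hence the filtration on $H_2(G)$ is concentrated in $p=2$, and $H_2(G)\cong E^\infty_{2,0}$. The incoming differentials $d_r$, $r\ge 2$, into $E^r_{2,0}$ originate at $E^r_{2+r,1-r}=0$. The outgoing differentials land in $E^r_{2-r,r-1}$; for $r=2$ this is $E^2_{0,1}$, a subquotient of $H_1^{[0]}(G)=0$, and for $r\ge 3$ it has negative $p$. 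So $E^\infty_{2,0}=E^2_{2,0}=H_2^{\mathit{inj}}(G)$, and the map $H_2(G)\to H_2^{\mathit{inj}}(G)$ is the composite of the edge maps.

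Second, $G$ is both $2$-quasimonophobic and $1$-quasimonophobic. Theorem~\ref{thm:iso} with $n=2$ therefore shows that the map (\ref{eq:comparison})
\[
H_2(CW^\Box(G))\to H_2^{\mathit{inj}}(G)
\]
is an isomorphism.

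Composing the two isomorphisms gives $H_2(G)\cong H_2(CW^\Box(G))$.

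The only delicate point is whether Theorem~\ref{thm:iso} really requires $K_i=0$ in the three degrees $i=1,2,3$ that enter the long exact sequence; its proof uses $H_2(K_\bullet)$ and $H_1(K_\bullet)$. In degree $3$, $K_3$ need not vanish, but I would argue this does not matter. Since $K_2=0$, the cycle group $Z_2(K_\bullet)$ is zero, so $H_2(K_\bullet)=0$ regardless of $K_3$. Since $K_1=0$, $H_1(K_\bullet)=0$. The exact sequence
\[
H_2(K_\bullet)\to H_2(CW^\Box(G))\to H_2^{\mathit{inj}}(G)\to H_1(K_\bullet)
\]
then yields the isomorphism.

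I expect this bookkeeping — that only $K_1$ and $K_2$ must vanish, which is exactly what Theorem~\ref{thm:cell} supplies for $n=1,2$ — to be the main thing to check. The rest is formal.
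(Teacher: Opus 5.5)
Your proposal is correct and is essentially the paper's proof. Triangle-freeness gives $H_2^{[1]}(G)=0$ (Theorem~\ref{thm_H_2_1=0}), so the degree spectral sequence yields $H_2(G)\cong E^\infty_{2,0}=H_2^{\mathit{inj}}(G)$, and Theorem~\ref{thm:iso} with $n=2$ identifies $H_2^{\mathit{inj}}(G)$ with $H_2(CW^\Box(G))$; your extra checks (no higher differentials touch the $(2,0)$ spot, and only $K_1=K_2=0$ is needed in the long exact sequence) just spell out steps the paper leaves implicit.
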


\begin{proof}
Since $G$ is $1$- and $2$-quasimonophobic Theorem~\ref{thm:iso} implies that 
$$ H^\emph{inj}_2(G) \cong H_2(CW^\Box(G)). $$

Moreover, by Theorem~\ref{thm_H_2_1=0}, we have
$H_2^{[1]}(G)=0$ because $G$ is $1$-quasimonophobic (hence triangle-free).

Therefore, for $H_2(G)$, the only nontrivial contribution in the degree spectral sequence comes from
$H^\mathit{inj}_2(G)$, and this agrees with the cellular homology group.
Hence
\[
H_2(G)=H_2\bigl(CW^\Box(G)\bigr).
\]
\end{proof}

The following example shows that the assumption of $2$-quasimonophobicity
is necessary in Theorem~\ref{thm:main}.

\begin{eg}\label{eg_K23}
Consider the complete bipartite graph $K_{2,3}$ with bipartition
$\{v_1,v_2\}$ and $\{u_1,u_2,u_3\}$.
In the sense of discrete homotopy theory,
$K_{2,3}$ is contractible, since it deformation retracts onto one
of its edges (see~\cite{barcelo2001foundations} for the definition of deformation retracts).
Consequently,
\[
H_2(K_{2,3}) = 0 .
\]

The cubical CW--complex $CW^\Box(K_{2,3})$
obtained by filling in the cubes of $K_{2,3}$ is homeomorphic to
$S^2$.

The graph $K_{2,3}$ is not $2$-quasimonophobic.
Indeed, the non-injective singular $3$-cube
\[
\sigma \colon I^3 \to K_{2,3}
\]
given by
\begin{align*}
\sigma(0,0,0)&=v_1, &
\sigma(0,1,0)&=u_1, &
\sigma(1,0,0)&=u_2, &
\sigma(1,1,0)&=v_2, \\
\sigma(0,0,1)&=u_3, &
\sigma(1,1,1)&=u_3, &
\sigma(0,1,1)&=v_1, &
\sigma(1,0,1)&=v_1
\end{align*}
witnesses the nonmonophobicity of the
$2$-cubes in $K_{2,3}$.
Its boundary kills the injective $2$-cycle in $C^{[2]}_2(K_{2,3})/\sim$ corresponding to the generator of
$$ H_2(CW^\Box(K_{2,3})) = \ZZ, $$
showing that the map
$$ C^{[2]}_2(K_{2,3})/\sim \to H^{[2]}_2(K_{2,3}) $$
is not injective.

Thus, Theorem~\ref{thm:main} fails without the
assumption of $2$-quasimonophobicity.
\end{eg}

\section{Applications and  questions}\label{sec_[examples]}

In this section, we apply our main result (Theorem \ref{thm:main}) to compute the second discrete cubical homology of the Greene sphere $G_n^\mathrm{sph}$, for $n \ge 4$.  This computation was previously unknown for $n > 4$. We formulate a question concerning how such computations might generalize for other quasimonophobic graphs. We then explain how our results imply that quasimonophobicity can be used to geometrically produce non-trivial classes in discrete cubical homology.

\begin{defn}\label{defn_Greenes_sphere}
Define the $n$th Greene sphere $G^{\mathrm{sph}}_n$ as the graph obtained from a $2n$-cycle
(with vertices $0,\dots,2n-1$) by adjoining two vertices $u$ and $v$,
where $u$ is adjacent to all odd vertices and $v$ to all even vertices.
\end{defn}

\begin{theorem}\label{thm_Greenes_sphere}
    $H_2(G^{\mathrm{sph}}_n)\cong \mathbb{Z}$ and $H_3(G^{\mathrm{sph}}_n)=H^{[2]}_3(G^{\mathrm{sph}}_n)$, for all $n \ge 4$.
\end{theorem}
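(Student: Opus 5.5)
The plan is to apply Theorem~\ref{thm:main} for $H_2$, and to read off $H_3$ from Corollary~\ref{cor_2_homology_no_3_cycles}. Both results need structural facts about $G^{\mathrm{sph}}_n$, which I establish first.

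\textbf{Step 1: structure of $G^{\mathrm{sph}}_n$.}
The graph is bipartite: one side is the even vertices together with $u$, the other side is the odd vertices together with $v$. Hence it is triangle-free, so it is $1$-quasimonophobic.

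Next I list the $4$-cycles, which are the $2$-cubes. A $4$-cycle alternates sides. For $n\ge 4$ the $2n$-cycle itself contains no $4$-cycle, so every $4$-cycle passes through $u$ or $v$. The candidates are:
\begin{itemize}
\item $(u,2k-1,2k,2k+1)$, using the path $2k-1,2k,2k+1$ on the rim;
\item $(v,2k,2k+1,2k+2)$;
\item cycles through both $u$ and $v$; these are impossible, since $u$ and $v$ lie on opposite sides and a $4$-cycle through both would need them at distance $2$, whereas they are at odd distance $\ge 1$ and not adjacent;
\item cycles $(u,a,x,b)$ with $a,b$ odd and $x$ a common neighbour of $a,b$ other than $u$; the rim gives a common neighbour only when $a,b$ are consecutive odd vertices, and $v$ is not adjacent to odd vertices.
\end{itemize}
So the $2$-cubes are exactly $2n$ squares: $n$ around $u$ and $n$ around $v$. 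Using $n\ge 4$, I check that each square is induced; in particular the diagonals are non-edges (for instance $u$ is not adjacent to $2k$). There are no $3$-cubes, because a $3$-cube would require two squares sharing an edge and arranged so that a vertex has degree $3$ inside it, and a direct check rules this out. Hence the cubical dimension is $2$.

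Geometrically, $CW^\Box(G^{\mathrm{sph}}_n)$ consists of two discs, the cones on the rim from $u$ and from $v$, glued along the $2n$-gon. It is therefore homeomorphic to $S^2$, and $H_2(CW^\Box)=\ZZ$.

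\textbf{Step 2: $2$-quasimonophobicity (main obstacle).}
Take a noninjective singular $3$-cube $\sigma$ with a face $f_i^a\sigma$ supported by a square $Q$. I must show that some other face is also supported by $Q$.

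The opposite face $f_i^{-a}\sigma$ maps pointwise into closed neighbourhoods of the vertices of $Q$, via the edges in direction $i$. I plan a local analysis using the rigid structure of neighbourhoods. Take $Q=(u,1,2,3)$. The neighbours of $u$ are the odd vertices, the neighbours of $2$ are $1,3,v$, the neighbours of $1$ are $0,2,u$, and the neighbours of $3$ are $2,4,u$.

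The opposite face is a graph map $I^2\to G$ in which each vertex lands in $N(\text{corresponding vertex of }Q)$. Noninjectivity of $\sigma$, together with the absence of squares through $v$ adjacent to $Q$ in a compatible way, should force one of two outcomes:
\begin{itemize}
\item the opposite face is again a parametrization of $Q$; or
\item some edge of $Q$ is doubled, in which case an adjacent face $f_j^b\sigma$ contains the same square.
\end{itemize}
This is a finite case check, uniform in $n\ge 4$ because squares meet only along single edges or vertices. An alternative I would try first is to verify that every square has a monophobic neighbourhood in the sense of \cite{monophobic}, via their criterion. Since monophobic implies quasimonophobic, that would settle this step.

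\textbf{Step 3: conclude.}
Theorem~\ref{thm:main} gives $H_2(G^{\mathrm{sph}}_n)\cong H_2(CW^\Box)\cong\ZZ$. Since the graph is triangle-free and has cubical dimension $2$, Corollary~\ref{cor_2_homology_no_3_cycles} gives $H_3\cong H_3^{[2]}$.
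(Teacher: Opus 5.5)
Your proposal follows essentially the same route as the paper: triangle-freeness, cubical dimension $2$, $CW^\Box(G^{\mathrm{sph}}_n)\approx S^2$, and $2$-(quasi)monophobicity imported from \cite{monophobic}, followed by Theorem~\ref{thm:main} for $H_2$ and Corollary~\ref{cor_2_homology_no_3_cycles} for $H_3$. The paper, like your fallback, simply cites \cite{monophobic} for Step~2, and that citation is what actually carries the step, since your direct case-check sketch is not a proof as written: for instance, the square $(v,0,1,2)$, which shares the edge $\{1,2\}$ with $Q=(u,1,2,3)$, does occur as the face opposite $Q$ in a noninjective singular $3$-cube, and it is then an adjacent face that is again supported by $Q$.
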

\begin{proof}
The CW complex $CW^\Box(G^\mr{sph}_n)$ is homeomorphic to $S^2$ (the graph $G^{\mr{sph}}_3 = I^3$, and hence is contractible).
Moreover, the graphs $G^{\mathrm{sph}}_n$ are triangle-free
($1$-quasimonophobic), have cubical dimension $2$, and are
$2$-monophobic \cite{monophobic}.  It follows that these graphs are quasimonophobic and thus the result follows.

Furthermore, since $G^{\mathrm{sph}}_n$ contains no $3$-cubes, 
$H_{3}^{[3]}(G^{\mathrm{sph}}_n)$ and 
$
H_{4}^{[3]}(G^{\mathrm{sph}}_n)
$
are both trivial,
and by Corollary \ref{cor_2_homology_no_3_cycles},
$
H_3(G^{\mathrm{sph}}_n)=H^{[2]}_3(G^{\mathrm{sph}}_n).
$
\end{proof}

Note that the computer computations of \cite{kapulkin2024efficient} actually establish that
$$ H_{3}(G^\mr{sph}_4) \cong H_{3}(CW^\Box(G^\mr{sph}_4)) = 0. $$
We also saw that $1$-quasimonophobicity eliminated the contributions of $H^{[1]}_{\ge 2}$ from the degree spectral sequence.  Motivated by these data points, it is natural to ask the following:

\begin{Quest}\label{Conj_H_2_CW_complex}
Suppose $G$ is a graph that is $(\le n)$-quasimonophobic. Does this imply that $H_n(G)$ is isomorphic to the homology group $H_n(CW^\Box(G))$?
\end{Quest}

Finally we explain how quasimonophobicity allows one to geometrically produce non-trivial classes in discrete cubical homology.

Suppose that 
$$ X_\bullet: \Box^{op}_{inj} \to \mr{Sets} $$ is a 
semi-cubical set, and let 
$$ G(X_\bullet) $$
denote the graph given by its 1-skeleton.
We will assume $X_\bullet$ satisfies the following conditions:
\begin{enumerate}
\item The graph $G(X_\bullet)$ is simple.
\item
$X_\bullet$ is \emph{regular} in the sense that its geometric realization $\abs{X}$ is a regular CW complex: each characteristic map
$$ \Box^n \to \abs{X_\bullet} $$
is injective.
\item For each $n$, the map
$$ X_n \to \{\text{$n$-cube subgraphs of $G(X_\bullet)$}\} $$
is an injection (i.e. every $n$-cube of $X_\bullet$ is determined by its $1$-skeleton).
\end{enumerate}
Suppose that
$$ x = \sum_i a_i \sigma_i \in C_n(X_\bullet) = \ZZ X_n $$  
is an $n$-cycle in the cubical $n$-chains of $X_\bullet$.

The class $x$ can be regarded as giving a cycle in the cellular chain complex 
$$ x \in C^\mr{cell}_n(\abs{X_\bullet}), $$
giving a class
$$ [x] \in H_n(\abs{X_\bullet}). $$
Conditions (2)-(3) above imply that there is an inclusion of CW-complexes
\begin{equation}\label{eq:CWinc}
 \abs{X_\bullet} \hookrightarrow CW^\Box (G(X_\bullet))
 \end{equation}
and we let  
$$ [x]_{CW} \in H_n(CW^\Box(G(X_\bullet))) $$
denote the image of $[x]$ under this map.
Alternatively, we may regard each $\sigma_i \in X_n$ as a map
$$ \sigma_i: \Box^n_\bullet \to X_\bullet $$
giving a map of graphs
$$ \sigma_i: I^n = G(\Box^n_\bullet) \to G(X_\bullet). $$
The resulting class
$$ x = \sum_i a_i \sigma_i \in C_n(G(X_\bullet)) $$
is a cycle in the discrete cubical chains in $G(X_\bullet)$, yielding a cycle
$$ [x]_\mr{graph} \in H_n(G(X_\bullet)). $$
Under the maps (\ref{eq:edge}) and (\ref{eq:cell})
$$ H_n(G(X_\bullet)) \to H_n^\mit{inj}(G(X_\bullet)) \leftarrow H_n(CW^\Box(G(X_\bullet))) $$
the classes $[x]_\mr{graph}$ and $[x]_{CW}$ map to the same element.

 Given a graph $G$ and a graph map
 $$ \alpha: G(X_\bullet) \to G $$
 the image of $[x]_\mathrm{graph}$
 under the map
 $\alpha_*: H_n(G(X_\bullet)) \to H_n(G)$ gives a ``geometric'' discrete cubical homology class
 $$ \alpha_*[x]_\mr{graph} \in H_n(G). $$
 If the map $\alpha$ is an injection, it induces an inclusion of CW complexes
 $$ CW^\Box(G(X_\bullet)) \hookrightarrow CW^\Box(G), $$
 and the image of $[x]_{CW}$ under this map gives a homology class
 $$ \alpha_*[x]_{CW} \in H_n(CW^\Box(G)). $$

 \begin{theorem}\label{thm:geom}
 Suppose that $G$ is $n$-quasimonophobic,
 $X_\bullet$, $x$ are as above,  
 $$ \alpha: G(X_\bullet) \hookrightarrow G $$
 is an inclusion of graphs, and the homology class
 $$ \alpha_*[x]_{CW} \in H_n(CW^\Box(G)) $$
 is non-trivial. Then
 the homology class
 $$ \alpha_*[x]_\mr{graph} \in H_n(G) $$
is non-trivial.
 \end{theorem}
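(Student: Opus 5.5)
The strategy is to trace the class $\alpha_*[x]_{\mr{graph}}$ through the edge maps (\ref{eq:edge}) into injective homology, and then use Theorem~\ref{thm:iso} to detect it there. By naturality of the degree filtration (graph maps do not increase degree), the edge homomorphism $H_n(-)\to E^\infty_{n,0}(-)\hookrightarrow H^{\mathit{inj}}_n(-)$ is natural with respect to graph maps. The same holds for the map (\ref{eq:comparison}) $H_n(CW^\Box(-))\to H^{\mathit{inj}}_n(-)$ induced from (\ref{eq:cell}), at least for injective graph maps, which preserve $n$-cube subgraphs and hence induce cellular inclusions. It therefore suffices to show that the images of $\alpha_*[x]_{\mr{graph}}$ and $\alpha_*[x]_{CW}$ in $H^{\mathit{inj}}_n(G)$ agree, and that the latter image is nonzero.

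First, I would note that the discussion preceding the theorem already states that $[x]_{\mr{graph}}$ and $[x]_{CW}$ have the same image in $H^{\mathit{inj}}_n(G(X_\bullet))$. Concretely, each $\sigma_i:I^n\to G(X_\bullet)$ is injective by regularity, so $x$ is a cycle made of degree-$n$ cubes. Its image on the $E^1$ edge $H^{[n]}_n$ is the class of $\sum a_i\sigma_i$, and this is also the image of the cellular chain $x$ under (\ref{eq:cell}), once each cell's characteristic map is chosen to be $\sigma_i$ (Lemma~\ref{lem:celliso}).

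Second, I would apply $\alpha$. Since $\alpha$ is injective, $\alpha\circ\sigma_i$ remains injective. The square comparing $C^{\mathrm{cell}}_n(CW^\Box(G(X_\bullet)))\to H^{[n]}_n(G(X_\bullet))$ with $C^{\mathrm{cell}}_n(CW^\Box(G))\to H^{[n]}_n(G)$ commutes, because both composites send a cell with characteristic map $\sigma_i$ to the class of the injective cube $\alpha\sigma_i$. Passing to homology of the edge complexes, the image of $\alpha_*[x]_{CW}$ under $H_n(CW^\Box(G))\to H^{\mathit{inj}}_n(G)$ equals the image of $\alpha_*[x]_{\mr{graph}}$ under the edge map $H_n(G)\to H^{\mathit{inj}}_n(G)$.

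Third, since $G$ is $n$-quasimonophobic, Theorem~\ref{thm:iso} says $H_n(CW^\Box(G))\to H^{\mathit{inj}}_n(G)$ is injective. So the nonzero class $\alpha_*[x]_{CW}$ maps to a nonzero element of $H^{\mathit{inj}}_n(G)$, and hence $\alpha_*[x]_{\mr{graph}}$ is nonzero, since a zero class would map to zero.

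The main obstacle is the bookkeeping in the second step. One has to check that the edge map really sends a cycle of injective $n$-cubes to its $E^1_{n,0}$ class, which holds because the class lies in filtration $n$ and its image in $C^{[n]}_n$ is just the same sum. One also has to check that orientation and sign conventions in the identification of Lemma~\ref{lem:celliso} are compatible with $\alpha$. I would handle the latter by choosing, for each cell of $CW^\Box(G)$ in the image of $\alpha$, the representative $\alpha\circ\sigma_i$ as its characteristic map.
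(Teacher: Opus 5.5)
Your proposal is correct and follows the same route as the paper. The paper's proof is the commutative diagram whose top row is $H_n(G(X_\bullet)) \to H_n^{\mathit{inj}}(G(X_\bullet)) \leftarrow H_n(CW^\Box(G(X_\bullet)))$, mapped by $\alpha_*$ to the corresponding row for $G$. It uses that $[x]_{\mathrm{graph}}$ and $[x]_{CW}$ have the same image in $H_n^{\mathit{inj}}(G(X_\bullet))$, and that $H_n(CW^\Box(G)) \to H_n^{\mathit{inj}}(G)$ is injective by Theorem~\ref{thm:iso}. Your extra bookkeeping (graph maps do not raise degree, and the choice of characteristic maps $\alpha\circ\sigma_i$) just makes explicit the commutativity that the paper takes for granted.
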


\begin{proof}
Consider the diagram
$$
\xymatrix{
H_n(G(X_\bullet)) \ar[r] \ar[d]_{\alpha_*} & H^\mit{inj}_n(G(X_\bullet)) \ar[d]_{\alpha_*} & H_n(CW^\Box(G(X_\bullet))) \ar[l] \ar[d]^{\alpha_*} \\
H_n(G) \ar[r] & H_n^\mit{inj}(G)  & H_n(CW^\Box(G)) \ar@{_{(}->}[l]^{(*)}  
}
$$
induced by (\ref{eq:edge}) and (\ref{eq:cell}).  Note that $n$-quasimonophobicity implies the map $(*)$ in the above diagram is an injection by Theorem~\ref{thm:iso}.  The result follows.
\end{proof}

For example, for $n \ge 4$ there are clearly regular semi-cubical sets $S^2(n)_\bullet$ whose realization is $S^2$, we have 
$$ G^\mr{sph}_{n} = G(S^2(n)_\bullet), $$
and for $n \ge 4$ the cannonical map gives a homeomorphism
$$ S^2 = \abs{S^2(n)_\bullet} \xrightarrow{\approx} CW^\Box(G^\mr{sph}_n). $$
The graph $G^\mr{sph}_n$ is $2$-quasimonophonic, the fundamental class
$$ [S^2] \in H_2(S^2(n)_\bullet) $$
has non-trivial image
$$ 0 \ne [S^2]_{CW} \in H_2(CW^\Box(G^\mr{sph}_n)) \cong \ZZ $$
and hence gives the (nontrivial) generator of 
$$ [S^2]_\mr{graph} \in H_2(G^{sph}_n) \cong \ZZ. $$

However, in the case of $n = 3$, while $G^\mr{sph}_3 = I^3$ is still $2$-quasimonophobic, we have
$$ CW^\Box(G^\mr{sph}_3) = \Box^3 $$
and hence the image of $[S^2]$, 
$$ [S^2]_{CW} \in H_2(CW^\Box(G^\mr{sph}_3)) = 0, $$
is trivial.
The element
$$ [S^2]_\mr{graph} \in H_2(G^\mr{sph}_3) = 0 $$
is also trivial, since $G^\mr{sph}_3 = I^3$ is contractible. 

Finally, we saw
(Example~\ref{eg_K23}) that the graph $K_{2,3}$ is not $2$-quasimonophobic.  There is a semi-cubical set $\td{S}^2_\bullet$, obtained by gluing three $\Box^2_\bullet$'s together, with the property that:
\begin{itemize}
\item $\abs{\td{S}^2_\bullet} \approx S^2$,
\item $G(\td{S}^2_\bullet) = K_{2,3}$.
\end{itemize}
Since $CW^\Box(K_{2,3}) \approx S^2$, the fundamental class
$$ [S^2] \in H_2(\td{S}^2_\bullet) $$
gives a non-trivial class
$$ 0 \ne [S^2]_{CW} \in H_2(CW^\Box(K_{2,3})) = \ZZ. $$
However, since $K_{2,3}$ is contractible, the class
$$ [S^2]_\mr{graph} \in H_2(K_{2,3}) = 0 $$
is trivial.  This demonstrates the necessity of the quasimonophobicity condition in Theorem~\ref{thm:geom}. 
 
\bibliography{Ref}
\bibliographystyle{amsalpha} 
\end{document}